\def\A{\mathcal{A}}
\def\B{\mathcal B}
\def\W{\mathcal W}
\def\D{\mathcal D}
\def\B{\mathcal B}
\def\C{\mathcal C}
\def\H{\mathcal H}
\def\L{\mathcal L}
\def\M{\mathcal{M}}
\def\P{\mathcal P}
\def\Z{\mathcal Z}
\def\amslatex{$\mathcal{A}\kern-.1667em\lower.5ex\hbox{$\mathcal{M}$}\kern-.125em\mathcal{S}$-\LaTeX}
\newtheorem{set}{set}[section]
\newtheorem{Corollary}[set]{Corollary}
\newtheorem{Definition}[set]{Definition}
\newtheorem{Lemma}[set]{Lemma}
\newtheorem{Proposition}[set]{Proposition}
\newtheorem{Theorem}[set]{Theorem}
\newcommand{\define}{\mathrel{\hbox{$\equiv$\hskip -.90em \lower .47ex \hbox{$\leftharpoondown$}}}}
\newcommand{\enifed}{\mathrel{\hbox{$\equiv$\hskip -.90em \lower .47ex \hbox{$\rightharpoondown$}}}}
\begin{document}
\title {\bf   $R$-diagonal and $\eta$-diagonal  Pairs of Random Variables}
\author{Mingchu Gao}
\address{School of Mathematics and Information Science,
Baoji University of Arts and Sciences,
Baoji, Shaanxi 721013,
China; and
Department of Mathematics,
Louisiana College,
Pineville, LA 71359, USA}
\email{mingchu.gao@lacollege.edu}

\date{}
\begin{abstract} This paper is devoted to studying  $R$-diagonal and $\eta$-diagonal pairs of random variables.    We generalize circular elements to the bi-free setting,
defining bi-circular element pairs of  random variables,  which provide examples of $R$-diagonal pairs of random variables.  Formulae are given for calculating the distributions of
 the product pairs of two $*$-bi-free $R$-diagonal pairs. When focusing on pairs of left acting operators and right acting operators from finite von Neumann algebras in the standard
 form, we characterize  $R$-diagonal pairs in terms of the $*$-moments of the random variables, and of  distributional invariance of the random variables under multiplication by free unitaries.
 We define $\eta$-diagonal pairs of random variables, and  give a characterization of  $\eta$-diagonal pairs in terms of the $*$-distributions of the random variables.
 If every non-zero element in a $*$-probability space has a non-zero $*$-distribution,  we  prove that  the unital algebra generated by a  $2\times 2$ off-diagonal matrix with entries of a  non-zero random variable $x$ and its adjoint $x^*$ in the algebra and the diagonal $2\times 2$ scalar matrices can never be Boolean independent from
  the  $2\times 2$ scalar matrix algebra with amalgamation over the diagonal scalar matrix algebra.
\end{abstract}
\maketitle
{\bf AMS Mathematics Subject Classification (2010)} 46L54.

{\bf Key words and phrases} $R$-diagonal pairs of random variables, $\eta$-diagonal pairs of random variables, bi-Boolean independence, bi-free independence, Boolean independence with amalgamation.
\section{Introduction}
We consider the framework of a $*$-probability space $(\A,\varphi)$, that is, $\A$ is unital $*$-algebra, $\varphi$ is a linear functional on $\A$ such that $\varphi(1_\A)=1$ and $\varphi(a^*)=\overline{\varphi(a)}$, where $1_\A$ is the unit of algebra $\A$, and $a\in \A$.   For an element $a\in \A$, the family of expectations of the words formed by  $a$ and $a^*$, $\{\varphi(a^{\omega(1)}\cdots a^{\omega(n)}): \omega:\{1,..., n\}\rightarrow \{1,*\}, n=1,...\}$, which is called the family of $*$-moments of $a$, carries significant probabilistic, algebraic, and analytic information for $a$. For instance, when $\A$ is a $C^*$-algebra, and $\varphi$ is faithful and positive, the family determines completely the unital $C^*$-subalgebra generated by $a$; a similar fact is true in the framework of von Neumann algebras (see e.g. Remark 1.8 in \cite{DV1}). In the  $C^*$-probability space $(B(\H), \varphi_\xi)$ (i.e., $\H$ is a Hilbert space, $B(\H)$ is the $C^*$-algebra of all bounded linear operators on $\H$, and $\varphi_\xi(T)=\langle T\xi, \xi\rangle$, for $T\in B(\H)$ and a fixed unit vector $\xi\in \H$), a normal operator can be transformed by the spectral theorem to a multiplication operator  on a $L^2$-space. Therefore, the distribution of such an operator is a probability measure on a compact subset of the complex plane. From a probabilistic point of view, the phenomenon of normal operators takes place in the classical commutative world with plenty of commutative tools such as classical probability and classical analysis. On the other hand, non-normal operators live in a truly non-commutative realm. The $*$-distribution of a non-normal operator is merely a unital linear functional on the polynomial algebra, and is much harder to analyze. It is therefore quite important to have sufficiently large classes of non-normal operators which can be treated probabilistically.

 $R$-diagonal elements are among the most prominent  non-normal operators arising from free probability.   The concept of $R$-diagonal elements in the tracial case was introduced in  \cite{NS1}, as a generalization of Haar unitaries and circular elements,  and was subsequently found to play an important rule in several problems in free probability (see e.g. \cite{NSS1}, \cite{NSS2}, and \cite{NS2}). The $R$-diagonal elements in the general (not necessarily tracial) case were treated in \cite{KS} and \cite{NSS}. The class of $R$-diagonal elements  has received quite a bit of attention in the free probability literature. In particular, elements with $R$-diagonal distributions were among the first examples of non-normal elements in a $W^*$-probability space for which the Brown spectral measure was calculated explicitly in \cite{HL}, and for which the Brown measure techniques could be used to find invariant subspaces in \cite{SS}. $R$-diagonal $*$-distributions also appear in large $N$ limit results for random matrices, in connection to the single ring theorem (\cite{GKZ}).

  An element $a$ in a $*$-probability space $(\A, \varphi)$ is said to be {\sl $R$-diagonal} if  the free cumulant $$\kappa_n(a_1, ..., a_n)=0,$$ unless the arguments $a_1, ..., a_n\in \{a, a^*\}$ appear alternatingly and $n$ is even (Definition 15.3 in \cite{NS}). Therefore, the distribution of a $R$-diagonal element is determined by two sequences $$\{\alpha_n:=\kappa_{2n}(a, a^*, ..., a, a^*): n=1, 2,...\},\{\beta_n:=\kappa_{2n}(a^*, a, ..., a^*, a): n=1, 2, ...\},$$ which are called determining sequences of $a$.
 Nica, Shlyakhtenko, and Speicher characterized $R$-diagonal elements  in terms of their $*$-moments, the invariance of their distributions under multiplication by free unitaries, and the freeness of the corresponding matrix from the $2\times 2$ scalar matrix algebra with amalgamation over the diagonal scalar matrix algebra (Theorem 1.2 in \cite{NSS}). Krawczyk and Speicher proved that $ab$ is $R$-diagonal if  $a$ is $R$-diagonal, and $a$ and $b$ are $*$-free (Proposition 3.6 in \cite{KS}). Moreover, if $b$ is also $R$-diagonal, the authors gave the formulae to compute the determining sequences of $ab$ in terms of the determining sequences of  $a$ and $b$ (Proposition 3.9 in \cite{KS}).

 Skoufranis introduced the concept of $R$-diagonal pairs of random variables, as an example and a resource to produce $R$-cyclic pairs of matrices of random variables in \cite{PS} (Example 4.7 in \cite{PS}; see also Definition 2.1 in this paper). Skoufranis proved that a two faced pair of left acting matrices of random variables  and right acting matrices of random variables is bi-free from the pair of the left acting scalar matrix algebra, and the right acting scalar matrix algebra with amalgamation over the diagonal scalar matrix algebra $\D_2$ if and only if the two faced pair of matrices of random variables is $R$-cyclic (Theorem 4.9 in \cite{PS}), which implies that $(x,y)$ is $R$-diagonal if and only if $(L(X), R(Y))$ is bi-free from $(L(M_2(\mathbb{C})), R(M_2(\mathbb{C})^{op}))$ with amalgamation over $\D_2$ with respect to $F_2:M_2(\A)\rightarrow \D_2$, $F_2([a_{ij}])=diag(\varphi(a_{11}), \varphi(a_{22}))$, for $[a_{ij}]\in M_2(\A)$,  where $X=\left(\begin{matrix}0&x\\
 x^*&0\end{matrix}\right)$ and $Y=\left(\begin{matrix}0&y\\
 y^*&0\end{matrix}\right)$, and $diag(\varphi(a_{11}), \varphi(a_{22}))$ is the diagonal matrix in $\D_2$ with $(1,1)$-entry $\varphi(a_{11})$ and $(2,2)$-entry $\varphi(a_{22})$ (Proposition 2.21 in \cite{GK}). Hence, Skoufranis' work in Section 4 of \cite{PS} implies a bi-free analogue of the characterization of  $R$-diagonal pairs in terms of freeness with amalgamation (Condition 5 in Theorem 1.2 in \cite{NSS}). Let $(x_1, y_1)$ and $(x_2, y_2)$ be $*$-bi-free pairs of random variables in a $*$-probability space $(\A,\varphi)$.  G. Katsimpas \cite{GK} proved that if $(x_1, y_1)$ is $R$-diagonal, the $(x_1x_2, y_2y_1)$ and $(x_1^n, y_1^n)$ are $R$-diagonal. If, furthermore, $(x_2, y_2)$ is also $R$-diagonal, then $(x_1x_2, y_1y_2)$ is $R$-diagonal, too (Theorems 3.2, 3.5, and Proposition 3.4 in \cite{GK}). G. Katsimpas \cite{GK} also proved  distributional invariance of a $R$-diagonal pair of random variables under  multiplication by a $*$-bi-free bi-Haar unitary pair.

 In this paper, we continue the study on $R$-diagonal pairs of random variables. Haar unitaries and circular elements are two typical examples of $R$-diagonal random variables. Katsimpas  proved that bi-Haar unitary pairs are $R$-diagonal (Corollary 2.18 in \cite{GK}). We generalize circular elements to the bi-free setting, defining bi-circular element
 pairs, and prove that such a pair is $R$-diagonal. We provide  formulae for calculating determining sequences for the product pairs of two $*$-bi-free  $R$-diagonal pairs of random variables. According to Voiculescu's philosophy on bi-free probability (\cite{DV}), it is natural and more meaningful to investigate bi-free probabilistic phenomena in the framework  of pairs of left acting operators and right acting operators. We thus focus  on the study  of $R$-diagonal pairs of left acting operators and right acting operators from  finite von Neumann algebras in the standard form (see Section 6 in \cite{DV} for the construction). In this case, we characterize  $R$-diagonal pairs in terms of the $*$-moments of the random variables, and of distributional invariance  of the random variables under multiplication by free unitaries, generalizing the main work in \cite{NSS} to the bi-free setting.

 From a combinatorial point of view, the main difference between a variety of (non-commutative) probability theories consists of choosing different partitions in defining cumulants.   Let $\P(n)$ be the set of all partitions of the set $\{1,..., n\}$, $NC(n)$ the set of all non-crossing partitions, and $IN(n)$ the set of all interval partitions (i.e., each block of the partition is an interval $\{p+1,..., q\}\subseteq \{1,..., n\}$ of natural numbers).     For $a_1, ..., a_n$ in a non-commutative probability space $(\A, \varphi)$, the classical cumulants were defined by $$\varphi(a_1\cdots a_n)=\sum_{\pi\in \P(n)}c_\pi(a_1,..., a_n),$$ where $(c_{\pi})_{\pi\in\mathcal{P}(n), n\in \mathbb{N}}$ is the family of cumulants, which is a multiplicative family of functions on $\P:=\coprod_{n\in \mathbb{N}}\P(n)$. Unital subalgebras $\A_1$ and $\A_2$ are independent (or, called tensorially  independent) in $(\A, \varphi)$ if and only if all mixed cumulants of elements from $\A_1$ and $\A_2$ vanish (Theorem 11.32 in \cite{NS}). When  restricting the partitions to non-crossing ones, we get free cumulants:
 $$\varphi(a_1\cdots a_n)=\sum_{\pi\in NC(n)}\kappa_{\pi}(a_1,..., a_n),$$ where  $(\kappa_{\pi})_{\pi\in NC(n), n\in \mathbb{N}}$ is called the family of free cumulants, a multiplicative family of functions on $\coprod_{n\in \mathbb{N}}NC(n)$. Unital subalgebras $\A_1$ and $\A_1$ are freely independent in $(\A, \varphi)$ if and only if all mixed free cumulants of elements from $\A_1$ and $\A_2$ vanish (Theorem 11.16 in \cite{NS}). Furthermore, when summing only interval partitions, we get
$$\varphi(a_1\cdots a_n)=\sum_{\pi\in IN(n)}B_\pi(a_1,..., a_n),$$ where $(B_{\pi})_{ \pi\in IN(n), n\in \mathbb{N}}$ is the family of Boolean cumulants, a multiplicative family of functions on $\coprod_{n\in \mathbb{N}} IN(n)$. Non-unital subalgebras $\A_1$ and $\A_1$ are Boolean independent in $(\A,\varphi)$ if and only if all mixed Boolean cumulants of elements from $\A_1$ and $\A_2$ vanish (\cite{GS} and \cite{SW}).

The free cumulants of a random variable $a$ can be used to define a formal series, called $R$-transform (or $R$-series) of $a$, $R_{a}(z)=\sum_{n=1}^\infty \kappa_n(a)z^n,$ where $\kappa_n=\kappa_{1_n}$ and $1_n=\{\{1,..., n\}\}$ is the one-block partition of the set $\{1,..., n\}$. Similarly, The Boolean cumulants of $a$ can be used to define $\eta$-series $\eta_a(z)=\sum_{n=1}^\infty B_n(a)z^n$, where $B_n=B_{1_n}$. With the same spirit, Gu and Skoufranis \cite{GS} defined bi-Boolean cumulants, bi-Boolean independence, and  bi-Boolean $\eta$-series.
A $R$-diagonal element has a `diagonal' $R$-series $$R_{a, a^*}(z, z^*)=\sum_{n=1}^\infty \kappa_{2n}(a, a^*, ..., a, a^*)(zz^*)^n+\sum_{n=1}^\infty \kappa_{2n}(a^*, a, ..., a^*, a)(z^*z)^n.$$ Thus, Bercovici et al. \cite{BNNS} call an element $a\in A$ {\sl $\eta$-diagonal} if its $\eta$-series is `diagonal' $$\eta_a(z)=\sum_{n=1}^nB_{2n}(a, a^*, ..., a, a^*)(zz^*)^n+\sum_{n=1}^nB_{2n}(a^*, a, ..., a^*, a)(z^*z)^n.$$ The authors of \cite{BNNS} gave a characterization of a $\eta$-diagonal element in terms of the $*$-moments of the element (Theorem 2.8 in \cite{BNNS}).

In this paper, we define {\sl $\eta$-diagonal} pairs of random variables and give a characterization of a $\eta$-diagonal pair in terms of the $*$-moments of the random variables,
generalizing the work in Section 2 of \cite{BNNS} to the bi-Boolean case. The property of being $R$-diagonal  for a random variable can be characterized in terms of the
 freeness of the associated matrix of the random variable from the  scalar $2\times 2$ matrix algebra with amalgamation  over
 the diagonal scalar matrix algebra (\cite{NSS}). It is natural and interesting to study a similar question in the $\eta$-diagonal case. We find that if every non-zero element in a $*$-probability space $(\A,\varphi)$ has a non-zero $*$-distribution, then, for a non-zero $x\in \A$,  the unital subalgebra $\Z$ generated by the
 matrix $\left(\begin{matrix}0&x\\
x^*&0\end{matrix}\right)$ and  diagonal $2\times 2$ scalar matrices can never be   Boolean independent from the scalar matrix algebra $M_2(\mathbb{C})$ with amalgamation over the diagonal scalar matrix algebra $\D_2$.

Besides this Introduction, this paper consists of four sections. In Section 2,   we define bi-circular element pairs of random variables, and prove that such a pair is $R$-diagonal
(Definition 2.3 and Theorem 2.4).  Formulae are given  to calculate the determining sequences of $(x_1x_2, y_2y_1)$ and $(x_1x_2, y_1y_2)$ for  $*$-bi-free pairs $(x_1, y_1)$ and $(x_2, y_2)$, if  both $(x_1,y_1)$ and $(x_2, y_2)$ are $R$-diagonal (Theorem 2.5 and Corollary 2.6).  In the single random variable case, it was proved that if $a$ is $R$-diagonal, then $aa^*$ and $a^*a$ are free (Corollary 15.11 in \cite{NS}). We prove that there is an $R$-diagonal pair of random variables $(x,y)$ in a $*$-probability space $(\A, \varphi)$ such that $(xx^*, yy^*)$ is not bi-free from $(x^*x, y^*y)$ (Theorem 2.7). In  Section 3 we aim to  study $R$-diagonal pairs of left acting operators  and right acting operators from finite von Neumann algebras in the standard form. We give characterizations of  $R$-diagonal  pairs in this case (Theorem 3.3). Section 4 is devoted to studying $\eta$-diagonal pairs of random variables. We characterize  $\eta$-diagonal pairs in terms of the $*$-moments of the random variables (Theorem 4.8). As in the $R$-diagonal case, we find an $\eta$-diagonal pair of random variables $(x,y)$, for which $(xx^*, yy^*)$ is not bi-Boolean independent from $(x^*x, y^*y)$ (Corollary 4.10).
  Finally, in Section 5, we study the Boolean independence of $\Z$ and $M_2(\mathbb{C})$ with amalgamation over the scalar diagonal matrix algebra (Theorem 5.2).

The reader is referred to \cite{NS} and \cite{VDN} for the basics on free probability, and to \cite{DV}, \cite{CNS1}, and \cite{CNS2} for the basics on bi-free probability.

{\bf Acknowledgement} The author would like to thank the anonymous referee(s) for carefully reading the original manuscript and pointing out tremendous typos and mistakes and giving suggestions to improve it.
\section{Products of bi-free $R$-diagonal pairs of random variables }

In this section, we study $R$-diagonal pairs of random variables, giving formulae to compute the distributions of the product pairs of two bi-free $R$-diagonal pairs of random variables.

Let $I$ and $J$ be two index sets, and  $\chi:\{1, 2, \cdots, n\}\rightarrow I\bigsqcup J$. We define a permutation $s_\chi$ of $\{1,2,..., n\}$ by $\chi^{-1}(I)=\{s_\chi(1)<s_\chi(2)<\cdots s_\chi(k)\}$ and $\chi^{-1}(J)=\{s_\chi(k+1)>s_\chi(k+2)>\cdots >s_\chi(n)\}.$ The permutation $s_\chi$ defines a new order on $\{1, 2, ..., n\}$: $s_\chi(1)\prec_{\chi} s_{\chi}(2)\prec_{\chi}\cdots \prec_{\chi} s_\chi(n)$.

Based on the ideas in defining $R$-diagonal random variables, Skoufranis \cite{PS} gave the following concept of $R$-diagonal pairs of random variables.
\begin{Definition}[Example 4.7 in \cite{PS}] Let $(\A,\varphi)$ be a $*$-probability space and $(x,y)$ be a pair of elements in $\A$. We say that $(x,y)$ is $R$-diagonal if all odd order bi-free cumulants of $((x,x^*), (y, y^*))$ are zero and $\kappa_\chi(z_1,..., z_{2n})=0$ unless the tuple $(z_{s_\chi(1)},..., z_{s_\chi(2n)})$ is one of the following forms
\begin{enumerate}
\item $(x^{\omega(1)},..., x^{\omega(k)}, y^{\omega(k+1)},..., y^{\omega(2n)})$, $\omega:\{1,..., 2n\}\rightarrow \{1,*\}$, $\omega(1)=1, \omega(i)\ne \omega(i+1), i=1, ..., 2n-1$, $0\le k\le 2n$,
\item $(x^{\omega(1)},..., x^{\omega(k)}, y^{\omega(k+1)},..., y^{\omega(2n)})$, $\omega:\{1,..., 2n\}\rightarrow \{1,*\}$, $\omega(1)=*, \omega(i)\ne \omega(i+1), i=1, ..., 2n-1$, $0\le k\le 2n$.
\end{enumerate}
\end{Definition}
The distribution of a $R$-diagonal pair of random variables is thus determined by the following sequences
$$\{\alpha_{\chi}=\kappa_{\chi}(z_1,..., z_{2n}): \chi: \{1,..., 2n\}\rightarrow \{l,r\}, n=1, ...\},$$ where $(z_{s_\chi(1)},..., z_{s_\chi(2n)})=(x^{\omega(1)},..., x^{\omega(k)}, y^{\omega(k+1)},..., y^{\omega(2n)})$, $\omega(1)=1$,    $\omega:\{1,..., 2n\}\rightarrow \{1,*\}$, and $\omega(i)\ne \omega(i+1)$, for $i=1, ..., 2n-1$, for  $V=\{i_1<\cdots <i_k\}\subseteq \{1,..., n\} $, $\alpha_{\chi}(V)=\kappa_{\chi|_{V}}(z_{i_1}, ..., z_{i_k})$;

$$\{\beta_{\chi}=\kappa_{\chi}(z_1,..., z_{2n}): \chi:\{1,..., 2n\}\rightarrow \{l,r\}, n=1,...\},$$ where $(z_{s_\chi(1)},..., z_{s_\chi(2n)})=(x^{\omega(1)},..., x^{\omega(k)}, y^{\omega(k+1)},..., y^{\omega(2n)})$, $\omega(1)=*$,  $\omega:\{1,..., 2n\}\rightarrow \{1,*\}$, $\omega(i)\ne \omega(i+1)$, for $i=1, ..., 2n-1$, for $V=\{i_1<\cdots <i_k\}\subseteq \{1,..., n\} $, $\beta_{\chi}(V)=\kappa_{\chi|_{V}}(z_{i_1}, ..., z_{i_k})$.

The two sequences $\{\alpha_{\chi}: \chi:\{1, ..., n\}\rightarrow \{l,r\}, n=1, ...\}$ and $\{\beta_{\chi}: \chi:\{1, ..., n\}\rightarrow \{l,r\}, n=1, ...\}$ are called the determining sequences of the $R$-diagonal pair $(x,y)$.

The bi-free generalization of Haar unitaries was first proposed in Definition 10.2.1 in \cite{CNS2} in the operator-valued setting. A scalar-valued version of the concept was given in \cite{GK}.
\begin{Definition}[Definition 2.15, \cite{GK}] A pair of unitaries $(u_l, u_r)$ in a $*$-probability space $(\A, \varphi)$ is a bi-Haar unitary pair if the algebras $alg(\{u_l, u_l^*\})$ and $alg(\{u_r, u_r^*\})$ commute and for $n, m\in \mathbb{Z}$, $$\varphi(u_l^nu_r^m)=\left\{\begin{matrix}1, & \text{if } m+n=0,\\
0,& \text{otherwise}.
\end{matrix}\right.$$
\end{Definition}
G. Katsimpas proved that a bi-Haar unitary pair is $R$-diagonal (Corollary 2.18 in \cite{GK}). Another typical example of $R$-diagonal random variables is the circular random variable (Lecture 15 in \cite{NS}). We  generalize circular elements to the bi-free setting, providing another kind of  examples of $R$-diagonal pairs of random variables.
\begin{Definition}Let $(z_{1,l}, z_{1,r})$ and $(z_{2,l}, z_{2,r})$ be two bi-free pairs of self-adjoint elements in a $*$-probability space $(\A, \varphi)$, and  the two pairs have the same hermitian bi-free central limit distribution, that is,   $\kappa_{\chi}(z_{i, \chi(1)}, ..., z_{i, \chi(n)})=\delta_{2,n}c_{\chi(1), \chi(2)}$, for $i=1, 2$, $\chi:\{1, 2, ..., n\}\rightarrow \{l,r\}$, and the second moment matrix $C=(c_{i,j})_{i, j=l,r}\ge 0$. (Definition 7.7 and Theorem 7.8 in \cite{DV}). Define
$c_l=\frac{1}{\sqrt{2}}(z_{1, l}+\imath z_{2,l})$, $c_r=\frac{1}{\sqrt{2}}(z_{1, r}+\imath z_{2,r})$, where $\imath=\sqrt{-1}$. We call $(c_l, c_r)$ a bi-circular element pair.
\end{Definition}
\begin{Theorem} A bi-circular element pair is $R$-diagonal.
\end{Theorem}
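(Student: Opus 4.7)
The plan is to compute all bi-free cumulants of $(c_l, c_l^*, c_r, c_r^*)$ directly by expanding each $c_{\chi(k)}^{\omega(k)}$ as a linear combination of the self-adjoint generators $z_{1,\chi(k)}$ and $z_{2,\chi(k)}$, and then invoking two facts: the vanishing of mixed bi-free cumulants of the two bi-free pairs $(z_{1,l},z_{1,r})$ and $(z_{2,l},z_{2,r})$, and the centered-second-moment property of the hermitian bi-free central limit distribution. Writing $c_{\chi(k)}^{\omega(k)} = \tfrac{1}{\sqrt{2}}(z_{1,\chi(k)} + i\sigma_k z_{2,\chi(k)})$ with $\sigma_k = +1$ if $\omega(k)=1$ and $\sigma_k = -1$ if $\omega(k)=*$, multilinearity of bi-free cumulants gives
\begin{equation*}
\kappa_\chi\bigl(c_{\chi(1)}^{\omega(1)},\dots,c_{\chi(n)}^{\omega(n)}\bigr)
= \frac{1}{2^{n/2}} \sum_{\epsilon\in\{1,2\}^n} \Bigl(\prod_{k:\,\epsilon(k)=2} i\sigma_k\Bigr)\, \kappa_\chi\bigl(z_{\epsilon(1),\chi(1)},\dots,z_{\epsilon(n),\chi(n)}\bigr).
\end{equation*}

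Next I would eliminate almost all terms on the right. Because $(z_{1,l},z_{1,r})$ and $(z_{2,l},z_{2,r})$ are bi-free (and self-adjointness makes bi-freeness coincide with $*$-bi-freeness), the characterization of bi-freeness via vanishing of mixed cumulants kills every summand for which $\epsilon$ is not constant. Hence only the summands $\epsilon\equiv 1$ and $\epsilon\equiv 2$ survive, and by the central limit assumption each such cumulant is $\delta_{2,n}\,c_{\chi(1),\chi(2)}$. Therefore all bi-free cumulants of $(c_l,c_l^*,c_r,c_r^*)$ of order $n\neq 2$ vanish, which in particular kills every odd-order cumulant required by Definition 2.1.

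For the remaining case $n=2$, the formula collapses to
\begin{equation*}
\kappa_\chi\bigl(c_{\chi(1)}^{\omega(1)},c_{\chi(2)}^{\omega(2)}\bigr)
= \tfrac{1}{2}\bigl(1 - \sigma_1\sigma_2\bigr)\, c_{\chi(1),\chi(2)},
\end{equation*}
which is zero when $\omega(1)=\omega(2)$ and equals $c_{\chi(1),\chi(2)}$ when $\omega(1)\neq\omega(2)$. It remains only to translate this last condition into the alternating-pattern condition on $(z_{s_\chi(1)},z_{s_\chi(2)})$ of Definition 2.1. I would do this by running through the four possibilities $\chi\in\{(l,l),(l,r),(r,l),(r,r)\}$, observing that the permutation $s_\chi$ either preserves or reverses the two indices and therefore in each case the $\omega$-alternation required by the definition is exactly $\omega(1)\neq\omega(2)$.

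I do not expect a genuine obstacle: the heart of the argument is the standard bi-free version of the fact that any complex linear combination of bi-free central-limit self-adjoint pairs has only second-order cumulants, together with a sign-cancellation bookkeeping for $c$ versus $c^*$. The one subtlety that requires care is making sure the $\chi$-ordering conventions used in the definition of $R$-diagonal pairs (via $s_\chi$) are correctly lined up with the plain index order of the cumulant on the left-hand side; this is essentially a check of the four cases above and incurs no new difficulty beyond bookkeeping.
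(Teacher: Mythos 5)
Your proposal is correct and follows essentially the same route as the paper: expand by multilinearity, kill the mixed terms by bi-freeness of the two central-limit pairs, kill all orders $n\neq 2$ by the central limit assumption, and verify the sign cancellation $\tfrac12(1-\sigma_1\sigma_2)$ forces $\omega(1)\neq\omega(2)$ in the surviving second-order cumulants. The final $s_\chi$-bookkeeping you flag is indeed only the observation that for $n=2$ the alternation condition is invariant under reordering the two indices.
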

\begin{proof}
For $n\in \mathbb{N}$, $\chi:\{1,..., n\}\rightarrow \{l,r\}$, and $\omega:\{1,..., n\}\rightarrow \{1, -1\}$, let $$c_{i}=\left\{\begin{matrix}c_l, & \text{if } \chi(i)=l, \omega(i)=1,\\
c_l^*, & \text{if } \chi(i)=l, \omega(i)=-1,\\
c_r, & \text{if } \chi(i)=r, \omega(i)=1,\\
c_r^*, & \text{if } \chi(i)=r, \omega(i)=-1,
\end{matrix}\right. i=1, ..., n.$$ %
 We have
\begin{align*}
\kappa_\chi(c_1, ..., c_n)
=&\frac{1}{2^{\frac{n}{2}}}(\kappa_\chi(z_{1, \chi(1)},..., z_{1, \chi(n)})+\imath^{\omega(1)+\cdots +\omega(n)}\kappa_\chi(z_{2, \chi(1)},..., z_{2, \chi(n)}))\\
=&\frac{1}{2^{\frac{n}{2}}}\delta_{n,2}(\kappa_\chi(z_{1, \chi(1)}, z_{1, \chi(2)})+\imath^{\omega(1) +\omega(2)}\kappa_\chi(z_{2, \chi(1)}, z_{2, \chi(2)}))\\
=&\left\{\begin{matrix}c_{\chi(1),\chi(2)}, &\text{if } n=2, \text{and } \omega(1)\ne \omega(2),\\
 0, &\text{if } \omega(1)= \omega(2).
\end{matrix}\right.
\end{align*}
\end{proof}
G. Katsimpas \cite{GK} proved that if $(x_1, y_1)$ is $R$-diagonal, and $(x_1, y_1)$ and $(x_2, y_2)$ are $*$-bi-free, then $(x_1x_2, y_2y_1)$ is $R$-diagonal. If, furthermore, both $(x_1, y_1)$ and $(x_2, y_2)$ are $R$-diagonal, and the two pairs are $*$-bi-free, then $(x_1x_2, y_1y_2)$ is also $R$-diagonal (Theorem 3.2 and Proposition 3.4 in \cite{GK}). We now give formulae to compute the determining sequences of the product pairs $(x_1x_2, y_2y_1)$ and $(x_1x_2, y_1y_2)$.

\begin{Theorem} Let $(x_1, y_1)$ and $(x_2, y_2)$ be $R$-diagonal, with determining sequences $\{\alpha_{\chi}^{(1)}, \beta_{\chi}^{(1)}: \chi:\{1, ..., n\}\rightarrow \{l,r\}, n=1, 2, ...\}$ and $\{\alpha_{\chi}^{(2)}, \beta_{\chi}^{(2)}: \chi:\{1, ..., n\}\rightarrow \{l,r\}, n=1, 2, ...\}$, respectively,   and let $(x_1, y_1)$ and $(x_2, y_2)$ be $*$-bi-free in a $*$-probability space $(\A, \varphi)$.  Then the determining sequences $\{\hat{\alpha}_{\chi}: \chi:\{1,..., 2n\}\rightarrow \{l,r\}, n=1, 2,... \}$ and $\{\hat{\beta}_{ \chi}: \chi:\{1,..., 2n\}\rightarrow \{l,r\}, n=1, 2,...\}$ of the $R$-diagonal pair $(x_1x_2, y_2y_1)$ are given by the following formulae
$$\hat{\alpha}_{ \chi}=\sum_{\substack{\pi:=\pi_1\cup \pi_2\in BNC(\hat{\chi}),\\ \pi_1=\{V_1, ..., V_{p}\}, s_{\hat{\chi}}(1)\in V_1, \\ \pi_2=\{W_1,..., W_{q}\}}}
(\alpha_{\hat{\chi}}^{(1)}(V_1)\beta_{\hat{\chi}}^{(1)}(V_2)\cdots\beta_{ \hat{\chi}}^{(1)}(V_{p})\alpha_{\hat{\chi}}^{(2)}(W_1)\cdots  \alpha_{ \hat{\chi}}^{(2)}(W_{q})),$$
$$\hat{\beta}_{ \chi}=\sum_{\substack{\pi:=\pi_1\cup \pi_2\in BNC(\hat{\chi}), \\  \pi_1=\{V_1, ..., V_{p}\}, s_{\hat{\chi}}(1)\in W_1,  \\ \pi_2=\{W_1,..., W_{q}\}}}
(\beta_{\hat{\chi}}^{(2)}(W_1)\alpha_{\hat{\chi}}^{(2)}(W_2)\cdots\alpha_{ \hat{\chi}}^{(2)}(W_{q})\beta_{\hat{\chi}}^{(1)}(V_1)\cdots  \beta_{ \chi}^{(1)}(V_{p})),$$
where $\hat{\chi}$ is the canonical extension of $\chi$ to $\{1,..., 4n\}$ by the formula $\hat{\chi}(2k-1)=\hat{\chi}(2k)=\chi(k)$, for $k=1, ..., 2n$,  $$\pi_1=\{V_i\in \pi: \forall k\in V_i,  z_k\in \{x_1, x_1^*, y_1, y_1^*\},   i=1,..., p\},$$  $$\pi_2=\{W_i\in \pi: \forall k\in W_i, z_k\in \{x_2, x_2^*, y_2, y_2^*\},  i=1,..., q\}.$$
\end{Theorem}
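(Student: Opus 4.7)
The plan is a three-step reduction built on the bi-free cumulant-of-products expansion, followed by $*$-bi-freeness, followed by $R$-diagonality of each individual pair. First, I would recognize $\hat{\alpha}_{\chi}$ as a bi-free cumulant of products: $\hat{\alpha}_{\chi}=\kappa_{\chi}(w_1,\ldots,w_{2n})$, where each $w_k$ is a two-letter word in $\{x_1,x_2,y_1,y_2\}$ and their adjoints, determined by $\chi(k)$ and by the alternating pattern $\omega$ with $\omega(s_{\chi}(1))=1$: $w_k\in\{x_1x_2,\,x_2^*x_1^*\}$ if $\chi(k)=l$, and $w_k\in\{y_2y_1,\,y_1^*y_2^*\}$ if $\chi(k)=r$. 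Splitting each $w_k$ into its two atoms and placing them at positions $2k-1,\,2k$ is precisely what the canonical extension $\hat{\chi}(2k-1)=\hat{\chi}(2k)=\chi(k)$ encodes.

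Next, I would invoke the bi-free analogue of the Krawczyk--Speicher cumulant-of-products formula, available in the bi-free combinatorial framework of \cite{CNS1,CNS2}, to write
\[
\kappa_{\chi}(w_1,\ldots,w_{2n}) \;=\; \sum_{\pi \in BNC(\hat{\chi})} \kappa_{\pi}(z_1,\ldots,z_{4n}),
\]
the sum running over bi-non-crossing partitions satisfying the standard connectivity condition relative to the product-marking partition $\{\{2k-1,2k\}\}_{k=1}^{2n}$, with $z_j$ the atom at position $j$. Applying $*$-bi-freeness of $(x_1,y_1)$ and $(x_2,y_2)$, any $\pi$ containing a block with both a pair-$1$ and a pair-$2$ atom gives a vanishing mixed bi-free cumulant, so only partitions $\pi=\pi_1\sqcup\pi_2$ with $\pi_1$ consisting of pair-$1$ blocks and $\pi_2$ of pair-$2$ blocks survive. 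The multiplicativity of bi-free cumulants then factorizes $\kappa_{\pi}$ into $\prod_{V\in\pi_1}\kappa_{\hat{\chi}|_V}(z|_V)\cdot\prod_{W\in\pi_2}\kappa_{\hat{\chi}|_W}(z|_W)$.

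Finally, $R$-diagonality of each pair forces each factor $\kappa_{\hat{\chi}|_V}$ to vanish unless the atoms in the block alternate between unstarred and starred in the induced $\prec_{\hat{\chi}}$-order, in which case it equals $\alpha^{(i)}_{\hat{\chi}}(V)$ if the first atom is unstarred and $\beta^{(i)}_{\hat{\chi}}(V)$ if starred. Tracking how the alternation of $\omega$ at the product level propagates through the split ordering $\prec_{\hat{\chi}}$ identifies the pair-$1$ block containing $s_{\hat{\chi}}(1)$ as the one producing the $\alpha^{(1)}(V_1)$ factor; a parity argument, using both bi-non-crossing-ness and the pair-pure constraint, then shows that every other pair-$1$ block must start with a starred atom (contributing $\beta^{(1)}$) and every pair-$2$ block must start with an unstarred atom (contributing $\alpha^{(2)}$). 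The formula for $\hat{\beta}_{\chi}$ follows by the symmetric argument with $\omega(s_{\chi}(1))=*$.

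The main obstacle is precisely this last parity bookkeeping: establishing, from bi-non-crossingness and the pair-pure decomposition alone, that the $\alpha/\beta$-labels on the surviving blocks distribute exactly as one $\alpha^{(1)}$ and $(p-1)$ copies of $\beta^{(1)}$ on $\pi_1$, together with $q$ copies of $\alpha^{(2)}$ on $\pi_2$ (and symmetrically for $\hat{\beta}_{\chi}$). This is the genuinely bi-free combinatorial content of the theorem; the remaining ingredients---cumulant-of-products, vanishing of mixed cumulants, and the evaluation via determining sequences---are direct invocations of standard bi-free machinery combined with the single-pair $R$-diagonal definition.
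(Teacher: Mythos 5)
Your overall strategy coincides with the paper's: expand $\hat\alpha_\chi=\kappa_\chi(Z_1,\dots,Z_{2n})$ via the bi-free cumulant-of-products formula (Theorem 9.1.5 of \cite{CNS2}), kill mixed blocks by $*$-bi-freeness, and evaluate the surviving pure blocks using $R$-diagonality of each pair. The setup, the role of $\hat\chi$, and the reduction to partitions $\pi=\pi_1\sqcup\pi_2$ with all blocks of even size are correct and match the paper.

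However, the step you yourself single out as ``the main obstacle'' --- showing that exactly the block $V_1\ni s_{\hat\chi}(1)$ contributes $\alpha^{(1)}$, every other pair-$1$ block contributes $\beta^{(1)}$, and every pair-$2$ block contributes $\alpha^{(2)}$ --- is asserted rather than proven, and the ingredient you cite for it (``bi-non-crossing-ness and the pair-pure constraint'') is not the one that actually does the work. The paper's mechanism is twofold. First, one computes the reordered word $(z_{s_{\hat\chi}(1)},\dots,z_{s_{\hat\chi}(4n)})$ explicitly: since $(s_{\hat\chi}(2k-1),s_{\hat\chi}(2k))$ equals $(2s_\chi(k)-1,2s_\chi(k))$ when $\chi(k)=l$ and $(2s_\chi(k),2s_\chi(k)-1)$ when $\chi(k)=r$, the pair-$1$ letters occupy $\prec_{\hat\chi}$-positions $1,4,5,8,9,\dots,4n$ with position $1$ unstarred, positions $4k$ starred and positions $4k+1$ unstarred, while the pair-$2$ letters occupy positions $4k-2$ (unstarred) and $4k-1$ (starred). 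Second, Proposition 2.11 of \cite{GK} shows that for a partition with all blocks of even size, the connectivity condition $\pi\vee\hat 0_\chi=1_{\hat\chi}$ is \emph{equivalent} to the linking conditions $s_{\hat\chi}(1)\sim_\pi s_{\hat\chi}(4n)$ and $s_{\hat\chi}(2i)\sim_\pi s_{\hat\chi}(2i+1)$ for all $i$. Combining the two facts: the $\prec_{\hat\chi}$-minimum of a pair-$2$ block cannot sit at a position $4k-1$, since the linked position $4k-2$ would then lie in the same block and be smaller; hence every pair-$2$ block opens with an unstarred letter and yields $\alpha^{(2)}$. Likewise the minimum of a pair-$1$ block other than $V_1$ must be a starred position $4k$, yielding $\beta^{(1)}$. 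It is the connectivity condition (via this linking equivalence), not bi-non-crossingness per se, that forces the label distribution; you need to supply this argument, or cite Proposition 2.11 of \cite{GK}, to close the proof.
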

\begin{proof} We prove the formula for $\hat{\alpha}$. The proof for $\hat{\beta}$ is essentially the same. Let $n\in \mathbb{N}$, $\chi:\{1,..., 2n\}\rightarrow \{l,r\}$, $\omega:\{1,...,n\}\rightarrow \{1,*\}$, $\omega(i)\ne \omega(i+1)$, for $i=1, 2, ..., n-1$, and $$Z_k=z_{2k-1}z_{2k}=\left\{\begin{matrix}x_1x_2, &\text{ if } \chi(k)=l,\omega(k)=1,\\
x^*_2x^*_1, & \text{ if } \chi(k)=l,\omega(k)=*,\\
y_2y_1, &\text{ if } \chi(k)=r, \omega(k)=1,\\
y_1^*y_2^*, & \text{ if }\chi(k)=r, \omega(k)=*,\end{matrix}\right. \hspace{4mm} k=1, ..., 2n.$$ 
Note that the above equation also defines $z_{2k-1}$ and $z_{2k}$, for $k=1, ..., 2n$.  By Remark 9.1.3 in \cite{CNS2}, there is an injective and partial order-preserving embedding of $BNC(\chi)$, the set of all bi-non-crossing partitions of $\{1,..., n\}$ with respect to $\chi$ (see \cite{CNS1} for the details of bi-non-crossing partitions), into $BNC(\hat{\chi})$ via $\pi\rightarrow \hat{\pi}$ where the $p$-th node of $\pi$ is replaced by $(2p-1, 2p)$. Note that  $\hat{0}_\chi=\{\{1,2\}, ..., \{4n-1, 4n\}\}$.    By Theorem 9.1.5 in \cite{CNS2}, $$\kappa_\chi(Z_1,..., Z_n)=\sum_{\pi\in BNC(\hat{\chi}), \pi\vee \hat{0}_\chi=1_{\hat{\chi}}}\kappa_\pi(z_1,..., z_{2n}).\eqno (2.1)$$  To prove the formula for $\hat{\alpha}$, we assume that $\omega(1)=1$. By $(2.1)$ and the $*$-bi-freeness of $(x_1, y_1)$ and $(x_2, y_2)$, we have
\begin{align*}
\hat{\alpha}_{\chi}=\kappa_\chi(Z_1,..., Z_{2n})=&\sum_{\pi\in BNC(\hat{\chi}), \pi\vee \hat{0}_\chi=1_{\hat{\chi}}}\kappa_\pi(z_1,..., z_{4n})\\
=&\sum_{\substack{\pi_1\cup \pi_2=\pi\in BNC(\hat{\chi}),\\ \pi\vee \hat{0}_\chi=1_{\hat{\chi}}, s_{\hat{\chi}}(1)\in V_1\in \pi_1}}\kappa_{\pi_1}(z_1',...,z_{2n}')\kappa_{\pi_2}(z_1'',..., z_{2n}''),
\end{align*}
where $z_1', ..., z_{2n}'\in \{x_1, x_1^*, y_1, y_1^*\}$, $z_1'', ..., z_{2n}''\in \{x_2, x_2^*, y_2, y_2^*\}$,
and $\pi_1$ and $\pi_2$ are the subsets of $\pi$ defined in the statement of this theorem.  Note that  every block in $\pi=\pi_1\cup \pi_2$ must contain  an even number of elements in order for $\kappa_\pi(z_1,..., z_{4n})$ to have a non-zero contribution to the sum, since both $(x_1, y_1)$ and $(x_2, y_2)$ are $R$-diagonal. By Proposition 2.11 in \cite{GK}, $\pi\vee\hat{0}_\chi=1_{\hat{\chi}}$ and $|V|$ is even for every $V\in \pi$ if and only if $s_{\widehat{\chi}}(1)\sim_\pi s_{\hat{\chi}}(4n)$, and $s_{\hat{\chi}}(2i)\sim_\pi s_{\hat{\chi}}(2i+1)$, for $i=1, ..., 2n-1$. Note also that
 $$(s_{\hat{\chi}}(2k-1), s_{\hat{\chi}}(2k))=\left\{\begin{matrix}(2s_\chi(k)-1, 2s_\chi(k)), & \text{ if } \chi(k)=l,\\
 (2s_\chi(k), 2s_\chi(k)-1), & \text{ if } \chi(k)=r, k=1,..., n.\end{matrix}\right.$$

If $0<|\chi^{-1}(\{l\})|<2n$ is even, we have $$(Z_{s_\chi(1)},..., Z_{s_\chi(2n)})=(x_1x_2,..., x_2^*x_1^*, y_2y_1, y_1^*y_2^*, ..., y_1^*y_2^*).$$ Therefore,
$$(z_{s_{\hat{\chi}}(1)},..., z_{s_{\hat{\chi}}(4n)})=(x_1, x_2, ..., x_2^*, x_1^*, y_1, y_2, ..., y_2^*, y_1^*).$$
If $|\chi^{-1}(\{l\})|$ is odd, we have $$(Z_{s_\chi(1)},..., Z_{s_\chi(2n)})=(x_1x_2,x_2^*x_1^*,..., x_1x_2, y_1^*y_2^*, y_2y_1, ..., y_1^*y_2^*).$$ Therefore,
$$(z_{s_{\hat{\chi}}(1)},..., z_{s_{\hat{\chi}}(4n)})=(x_1, x_2, ..., x_1, x_2, y_2^*, y_1^*, ..., y_2^*, y_1^*).$$
If $\chi(1)=\cdots=\chi(2n)=l$, we have $$(z_{s_{\hat{\chi}}(1)},..., z_{s_{\hat{\chi}}(4n)})=(x_1, x_2, ..., x_2^*, x_1^*).$$
It follows that  $x_1, x_1^*, y_1, y_1^*$ appear  in the positions $1, 4, 5, ..., 4k, 4k+1..., 4n$ of the sequence $$(z_{s_{\hat{\chi}}(1)},..., z_{s_{\hat{\chi}}(4n)})$$ with $z_{s_{\widehat{\chi}}(1)}=x_1$, with the $x_1$ and $x^*_1$'s appearing to the left of the $y_1$ and $y_1^*$'s, and with starred and un-starred terms appearing in alternating order. Therefore, $(z_{s_{\widehat{\chi}}(1)},..., z_{s_{\hat{\chi}}(4n)})|_{V_1}=(x_1,  x_1^*,...z_{s_{\hat{\chi}}(|V_1|)})$, where $z_{s_{\hat{\chi}}(|V_1|)}\in \{x_1^*, y_1^*\}$,  and  the element of $(z_1,...,z_{4n})$ at the position $\min_{\prec_{\hat{\chi}}}(V_i)$ is $z^*$, where $z\in \{x_1, y_1\}$, and $\min_{\prec_{\hat{\chi}}}(V_i)$ is the minimal element of $V_i$ with respect to the order $\prec_\chi$ of $\{1, ...,n\}$ defined at the beginning of Section 2,   for $i=2,..., p$.
It follows that
\begin{align*}
\kappa_{\pi_1}(z_1,..., z_{4n})=&\kappa_{\pi_1}(z_1',..., z_{2n}')\\
=&\kappa_{\hat{\chi}}((z_1',..., z_{2n}')|_{V_1})\prod_{k=2}^p\kappa_{\hat{\chi}}((z_1',..., z_{2n}')|_{V_i})\\
=&\alpha_{ \hat{\chi}}^{(1)}(V_1)\prod_{i=2}^p\beta_{ \hat{\chi}}^{(1)}(V_i).
\end{align*}
Similarly, $x_2, x_2^*, y_2, y_2^*$ appear in the positions $2, 3, ..., 4k-2, 4k-1, ..., 4n-2, 4n-1$ of the sequence $(z_{s_{\hat{\chi}}(1)},..., z_{s_{\hat{\chi}}(4n)})$, and $z_{s_{\hat{\chi}}(4k-2)}=z$, $z_{s_{\hat{\chi}}(4k-1)}=z^*$, where $z\in \{x_2, y_2\}$. It follows that the element at the position $\min_{\hat{\chi}}(W_i)$ is a non-$*$-term, since $s_{\hat{\chi}}(4k-2)$ and $s_{\hat{\chi}}(4k-1)$ must being in the same block of $\pi$ implied by the condition $\pi\vee \hat{0}_\chi=1_{\hat{\chi}}$. We thus get $$\kappa_{\pi_2}(z_1,..., z_{4n})=\kappa_{\pi_2}(z_1'',..., z_{2n}'')=\prod_{i=1}^q\alpha_{\hat{\chi}}^{(2)}(W_i).$$

If $\chi^{-1}(\{l\})=\emptyset$, we have $(z_{s_{\hat{\chi}}(1)},..., z_{s_{\hat{\chi}}}(2n))=(y_1, y_2, y_2^*, y_1^*, ..., y_2^*, y_1^*)$. We get the same formula with $z_{s_{\widehat{\chi}}(1)}=y_1$.
\end{proof}
\begin{Corollary} Under the hypotheses of Theorem 2.5, the determining sequences $\{\hat{\alpha}_{\chi, n}, \hat{\beta}_{\chi, n}: \chi:\{1, 2, ..., n\}\rightarrow \{l,r\}, n=1, 2, ...\}$ of $(x_1x_2, y_1y_2)$ are given by the following formulae.
\begin{enumerate}
 \item If  $0<|\chi^{-1}(\{l\})|<2n,$, then $\hat{\alpha}_{\chi, n}=\hat{\beta}_{\chi, n}=0$.
\item If $\chi(i)=l$ for $i=1, 2, ..., 2n$, then
$$\hat{\alpha}_{\chi}=\sum_{\substack{\pi:=\pi_1\cup \pi_2\in BNC(\hat{\chi}),\\ \pi_1=\{V_1, ..., V_{p}\}, s_{\hat{\chi}}(1)\in V_1, \\ \pi_2=\{W_1,..., W_{q}\}}}
(\alpha_{\hat{\chi}}^{(1)}(V_1)\beta_{ \hat{\chi}}^{(1)}(V_2)\cdots\beta_{ \hat{\chi}}^{(1)}(V_{p})\alpha_{ \hat{\chi}}^{(2)}(W_1)\cdots  \alpha_{ \hat{\chi}}^{(2)}(W_{q})),$$
$$\hat{\beta}_{ \chi}=\sum_{\substack{\pi:=\pi_1\cup \pi_2\in BNC(\hat{\chi}),\\ \pi_1=\{V_1, ..., V_{p}\}, s_{\hat{\chi}}(1)\in W_1, \\ \pi_2=\{W_1,..., W_{q}\}}}
(\beta_{ \hat{\chi}}^{(2)}(W_1)\alpha_{ \hat{\chi}}^{(2)}(W_2)\cdots\alpha_{ \hat{\chi}}^{(1)}(W_{q})\beta_{ \hat{\chi}}^{(1)}(V_1)\cdots  \beta_{ \chi}^{(1)}(W_{q})).
$$
\item If $\chi(i)=r$ for $i=1, 2, ..., 2n$, then
$$\hat{\alpha}_{ \chi}=\sum_{\substack{\pi:=\pi_1\cup \pi_2\in BNC(\hat{\chi}), \\ \pi_1=\{V_1, ..., V_{p}\}, s_{\hat{\chi}}(1)\in W_1, \\ \pi_2=\{W_1,..., W_{q}\}}}
(\alpha_{\hat{\chi}}^{(2)}(W_1)\beta_{ \hat{\chi}}^{(2)}(W_2)\cdots\beta_{ \hat{\chi}}^{(2)}(W_{q})\alpha_{ \hat{\chi}}^{(1)}(V_1)\cdots  \alpha_{ \widehat{\chi}}^{(1)}(V_{p})),$$
$$\hat{\beta}_{ \chi}=\sum_{\substack{\pi:=\pi_1\cup \pi_2\in BNC(\hat{\chi}),\\ \pi_1=\{V_1, ..., V_{p}\}, s_{\hat{\chi}}(1)\in V_1, \\ \pi_2=\{W_1,..., W_{q}\}}}
(\beta_{ \hat{\chi}}^{(1)}(V_1)\alpha_{ \hat{\chi}}^{(1)}(V_2)\cdots\alpha_{ \hat{\chi}}^{(1)}(V_{p})\beta_{ \hat{\chi}}^{(2)}(W_1)\cdots  \beta_{ \chi}^{(2)}(W_{q})).
$$
\end{enumerate}
 Here $\hat{\chi}$, $\pi_1$ and $\pi_2$ are those defined in Theorem 2.5.
\end{Corollary}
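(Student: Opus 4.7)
The plan is to adapt the proof of Theorem~2.5 to the product $(x_1x_2,y_1y_2)$. Set
\[
Z_k=\begin{cases}(x_1x_2)^{\omega(k)},& \chi(k)=l,\\ (y_1y_2)^{\omega(k)},& \chi(k)=r,\end{cases}
\]
and let $z_{2k-1},z_{2k}$ denote the two factors of $Z_k$, with $\hat\chi$ the canonical extension from Theorem~2.5. Theorem~9.1.5 in \cite{CNS2} yields
\[
\hat{\alpha}_{\chi}=\kappa_\chi(Z_1,\ldots,Z_{2n})=\sum_{\substack{\pi\in BNC(\hat\chi)\\ \pi\vee\hat 0_\chi=1_{\hat\chi}}}\kappa_\pi(z_1,\ldots,z_{4n}),
\]
and the $*$-bi-freeness of $(x_1,y_1)$ and $(x_2,y_2)$, combined with the even-block condition forced by $R$-diagonality, restricts the sum to $\pi=\pi_1\cup\pi_2$ with $\pi_i$ supported on the positions carrying variables from pair~$i$. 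The identical formalism applies to $\hat\beta_\chi$.

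For Case~(1), assume $0<k:=|\chi^{-1}(\{l\})|<2n$. By Proposition~2.11 in \cite{GK}, the even-block requirement and $\pi\vee\hat 0_\chi=1_{\hat\chi}$ together force $s_{\hat\chi}(2k)\sim_\pi s_{\hat\chi}(2k+1)$, that is, the boundary positions $2s_\chi(k)$ and $2s_\chi(k+1)$ must lie in a common block of $\pi$. A direct inspection of the factors of $(x_1x_2)^{\pm 1}$ and $(y_1y_2)^{\pm 1}$ gives
\[
z_{2s_\chi(k)}\in\{x_2,x_1^*\},\qquad z_{2s_\chi(k+1)}\in\{y_2,y_1^*\},
\]
with $z_{2s_\chi(k)}$ belonging to pair~$2$ exactly when $\omega(k)=1$ and $z_{2s_\chi(k+1)}$ belonging to pair~$2$ exactly when $\omega(k+1)=1$. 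Since $\omega$ is alternating, $\omega(k)\ne\omega(k+1)$, so the two variables belong to different pairs. Consequently every admissible $\pi$ contains a block mixing pair-$1$ and pair-$2$ variables, and $\kappa_\pi=0$ by $*$-bi-freeness; the same argument gives $\hat\beta_\chi=0$. This is where the product ordering is essential: in Theorem~2.5 the last factor of $Y=y_2y_1$ is the pair-$1$ variable $y_1$, which balances the pair-$2$ variable $x_2$ at the end of $X$ under alternating $\omega$, whereas here $Y=y_1y_2$ ends with pair-$2$, producing the fatal mismatch.

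For Cases~(2) and~(3), $\chi$ is constant and only one of $X=x_1x_2$ or $Y=y_1y_2$ enters the product. In Case~(2) the sequence $(z_{s_{\hat\chi}(1)},\ldots,z_{s_{\hat\chi}(4n)})$ coincides verbatim with the $\chi\equiv l$ subcase analyzed at the end of the proof of Theorem~2.5, so the stated formulae for $\hat\alpha$ and $\hat\beta$ follow directly. In Case~(3), $s_{\hat\chi}$ reverses the natural order, and the first variable in $\prec_{\hat\chi}$ is $z_{4n}$; this equals $y_2$ (pair~$2$) for $\hat\alpha$ and $y_1^*$ (pair~$1$) for $\hat\beta$, which explains why the leading factor in the Case~(3) $\hat\alpha$ formula is $\alpha^{(2)}(W_1)$ in place of $\alpha^{(1)}(V_1)$ and why the leading factor in $\hat\beta$ is $\beta^{(1)}(V_1)$. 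The remaining block-by-block accounting is the mirror of Theorem~2.5 with the roles of the two pairs (and of $x$ and $y$) swapped. The main obstacle is the type-mismatch bookkeeping in Case~(1): one must verify uniformly, across every configuration of $\omega(k),\omega(k+1)$, that the forced boundary pair always links a pair-$1$ position with a pair-$2$ position, and rule out, via Proposition~2.11, any alternative partition $\pi$ that could avoid this pairing once $R$-diagonality has excluded odd blocks.
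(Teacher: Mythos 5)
Your proposal is correct and follows essentially the same route as the paper: reduce to the cumulant expansion of Theorem 2.5, split contributing partitions as $\pi_1\cup\pi_2$ via $*$-bi-freeness, and use Proposition 2.11 of \cite{GK} together with $R$-diagonality to force a block mixing the two pairs when $0<|\chi^{-1}(\{l\})|<2n$. The only (immaterial) difference is that in Case (1) you exhibit the mixed block via the forced boundary link $s_{\hat\chi}(2k)\sim_\pi s_{\hat\chi}(2k+1)$, whereas the paper uses the equally forced link $s_{\hat\chi}(1)\sim_\pi s_{\hat\chi}(4n)$, which joins $x_1$ to $y_2^*$.
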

\begin{proof}
As in the proof of Theorem 2.5, we only prove the formulae for $\hat{\alpha}$. For $n\in \mathbb{N}$, $\chi:\{1,..., 2n\}\rightarrow \{l,r\}$, $\omega:\{1,...,n\}\rightarrow \{1,*\}$, $\omega(i)\ne \omega(i+1)$, for $i=1, 2, ..., n-1$, and $$Z_k=z_{2k-1}z_{2k}=\left\{\begin{matrix}x_1x_2, &\text{ if } \chi(k)=l,\omega(k)=1,\\
x^*_2x^*_1, & \text{ if } \chi(k)=l,\omega(k)=*,\\
y_1y_2, &\text{ if } \chi(k)=r, \omega(k)=1,\\
y_2^*y_1^*, & \text{ if }\chi(k)=r, \omega(k)=*,\end{matrix}\right. \hspace{4mm} k=1, ..., 2n.$$

When $0<|\chi^{-1}(\{l\})|<2n$, we have $$(z_{s_{\hat{\chi}}(1)},..., z_{s_{\hat{\chi}}(4n)})=(x_1, x_2, ..., x_2^*, x_1^*, y_2, y_1, ..., y_1^*, y_2^*),$$ or  $$(z_{s_{\hat{\chi}}(1)},..., z_{s_{\hat{\chi}}(4n)})=(x_1, x_2, ..., x_1, x_2, y_1^*, y_1^*, ..., y_1^*, y_2^*).$$ By the proof of Theorem 2.5, $s_{\hat{\chi}}(1)$ and $s_{\hat{\chi}}(4n)$ must be in the same block $V_1$. It follows that $\{z_k: k\in V_1\}$ contains $x_1$ and $y_2^*$. It implies that $\kappa_{\chi, V_1}(z_1, ..., z_{4n})=0$, since  $\{x_1, y_1\}$ and $\{x_2, y_2\}$ are $*$-bi-free. Thus, $\kappa_{\chi, \pi}(z_1, ..., z_{4n})=0$, for every partition $\pi$ in the sum of the formula for $\hat{\alpha}_{\chi, n}$.

When $\chi(i)=l$ for $i=1, 2, ..., 2n$, then we have $(z_{s_{\widetilde{\chi}}(1)},..., z_{s_{\widetilde{\chi}}(4n)})=(x_1, x_2, ..., x_2^*, x_1^*)$.  By the proof of Theorem 2.5,  we have
$$\hat{\alpha}_{ \chi}=\sum_{\substack{\pi:=\pi_1\cup \pi_2\in BNC(\hat{\chi}), \\ \pi_1=\{V_1, ..., V_{p}\}, s_{\hat{\chi}}(1)\in V_1, \\ \pi_2=\{W_1,..., W_{q}\}}}
\alpha_{\hat{\chi}}^{(1)}(V_1)\beta_{ \hat{\chi}}^{(1)}(V_2)\cdots\beta_{ \hat{\chi}}^{(1)}(V_{p})\alpha_{ \hat{\chi}}^{(2)}(W_1)\cdots  \alpha_{ \hat{\chi}}^{(2)}(W_{q}).
$$
When $\chi(i)=r$ for $i=1, 2, ..., 2n$, then we have $(z_{s_{\widetilde{\chi}}(1)},..., z_{s_{\widetilde{\chi}}(4n)})=(y_2, y_1, ..., y_1^*, y_2^*)$. By the proof of Theorem 2.5, we have
 $$\hat{\alpha}_{ \chi}=\sum_{\substack{\pi:=\pi_1\cup \pi_2\in BNC(\hat{\chi}), \\ \pi_1=\{V_1, ..., V_{p}\}, s_{\hat{\chi}}(1)\in W_1, \\ \pi_2=\{W_1,..., W_{q}\}}}
\alpha_{\hat{\chi}}^{(2)}(W_1)\beta_{ \hat{\chi}}^{(2)}(W_2)\cdots\beta_{ \hat{\chi}}^{(2)}(W_{q})\alpha_{ \hat{\chi}}^{(1)}(V_1)\cdots  \alpha_{ \hat{\chi}}^{(1)}(V_{p}).
$$
\end{proof}

It was proved that if $a$ is a $R$-diagonal, then $a^*a$ and $aa^*$ are free (Corollary 15.11 in \cite{NS}). In bi-free probability, G. Katsimpas showed that if $(x,y)$ is $R$-diagonal, then $(xx^*, y^*y)$ and $(x^*x, yy^*)$ are bi-free (Proposition 3.6 in \cite{GK}). The following result shows that it is not necessarily true that  $(xx^*, yy^*)$ and $(x^*x,y^*y)$ are bi-free. Another counterexample was given in \cite{GK} (Example 3.7 in \cite{GK}).
\begin{Theorem} There is an $R$-diagonal pair $(x,y)$ of random variables in a $*$-probability space $(\A, \varphi)$ such that $(xx^*, yy^*)$ and $(x^*x,y^*y)$ are not bi-free.
\end{Theorem}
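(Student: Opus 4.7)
The plan is to exhibit as the required counterexample a bi-circular element pair $(x,y) := (c_l, c_r)$ from Definition 2.3, with a deliberate choice of covariance matrix $C = (c_{i,j})_{i,j \in \{l,r\}} \geq 0$ having non-zero off-diagonal entry $c_{l,r}$ (for instance $c_{l,l} = c_{r,r} = 1$ and $c_{l,r} = c_{r,l} = 1/2$, which is positive definite and realizable via Theorem 7.8 in \cite{DV}). By Theorem 2.4 such a pair is automatically $R$-diagonal, so $R$-diagonal pairs with non-trivial left--right covariance do exist, and it is the parameter $c_{l,r}$ that I will exploit to destroy bi-freeness.

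To disprove bi-freeness of the pair $(c_l c_l^*, c_r c_r^*)$ and the pair $(c_l^* c_l, c_r^* c_r)$ it suffices to exhibit a single non-vanishing mixed bi-free cumulant. I would test the simplest candidate: the $2$-variable mixed cumulant $\kappa_\chi(c_l c_l^*, c_r^* c_r)$ with $\chi(1)=l$, $\chi(2)=r$, whose first argument lies in the left face of the first pair and whose second argument lies in the right face of the second pair. Since $BNC(\chi)$ for $n=2$ has only the one-block and the singleton partitions, this cumulant reduces to $\varphi(c_l c_l^* c_r^* c_r) - \varphi(c_l c_l^*)\varphi(c_r^* c_r)$.

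The substantive step is to compute the $4$-point moment via the bi-free moment-cumulant formula applied to $(c_l, c_r)$. By the proof of Theorem 2.4, the only non-zero bi-free cumulants of $(c_l, c_r)$ are $2$-cumulants whose two arguments carry opposite $\omega$-labels, with value $c_{\chi(1),\chi(2)}$. Against the bi-ordering $1 \prec 2 \prec 4 \prec 3$ forced by $\chi = (l,l,r,r)$, only two of the three pair partitions of $\{1,2,3,4\}$ are bi-non-crossing, namely $\{\{1,2\},\{3,4\}\}$ and $\{\{1,3\},\{2,4\}\}$ (the partition $\{\{1,4\},\{2,3\}\}$ is bi-crossing). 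With $\omega = (1,-1,-1,1)$, both blocks of each of the surviving partitions pair opposite $\omega$-labels and therefore contribute: the parallel pairing gives $c_{l,l}c_{r,r}$ and the nested pairing gives $c_{l,r}^2$. Combined with $\varphi(c_l c_l^*) = c_{l,l}$ and $\varphi(c_r^* c_r) = c_{r,r}$, this yields
\[ \kappa_\chi(c_l c_l^*, c_r^* c_r) = (c_{l,l}c_{r,r} + c_{l,r}^2) - c_{l,l}c_{r,r} = c_{l,r}^2 \neq 0, \]
contradicting bi-freeness.

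The main obstacle -- really an observation -- is to spot that a single $2$-variable mixed cumulant already detects the failure of bi-freeness, and to pinpoint the mechanism. The $\omega$-pattern $(1,-1,-1,1)$ of the ``symmetric'' arrangement $(c_l c_l^*, c_r^* c_r)$ forces the nested bi-non-crossing pairing $\{\{1,3\},\{2,4\}\}$ to link opposite-$\omega$ arguments and contribute; by contrast, the analogous computation for the ``asymmetric'' arrangement $(c_l c_l^*, c_r c_r^*)$ relevant to Proposition 3.6 of \cite{GK} has $\omega$-pattern $(1,-1,1,-1)$, for which the same nested partition pairs same-$\omega$ arguments and vanishes -- which is exactly what accounts for the bi-freeness in that case. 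Once $c_{l,r} \neq 0$, this $\omega$-parity distinction makes the counterexample essentially inevitable.
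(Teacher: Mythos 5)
Your proof is correct, and its computational core coincides with the paper's: both arguments detect the failure of bi-freeness through the single order-two mixed cumulant $\kappa_{(l,r)}(xx^*,y^*y)$, and in both cases the non-vanishing contribution comes from the nested bi-non-crossing pair partition $\{\{1,3\},\{2,4\}\}$ of $(x,x^*,y^*,y)$, which links opposite-starred left and right factors. The difference lies in the witness. The paper takes the minimal example from Section 7 of \cite{GS}, an $R$-diagonal pair whose only non-zero cumulants are $\kappa_2(x,y^*)=\kappa_2(x^*,y)=1$ (so that $\kappa_{(l,r)}(xx^*,y^*y)=\kappa_2(x^*,y)\kappa_2(x,y^*)=1$ while $\varphi(xx^*)=\varphi(y^*y)=0$); this is the leanest possible computation but lives in a $*$-probability space whose functional is not positive. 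You instead take a bi-circular pair from Definition 2.3 with correlated faces ($c_{l,r}\neq 0$), which ties the counterexample to Theorem 2.4 and is realizable on a Fock space with a genuine state; the price is the extra (correctly handled) parallel pairing term $c_{l,l}c_{r,r}$, which cancels against $\varphi(c_lc_l^*)\varphi(c_r^*c_r)$, leaving $c_{l,r}^2\neq 0$. Your closing observation about the $\omega$-parity of the nested pairing, explaining why the arrangement $(xx^*,yy^*)$ versus $(x^*x,y^*y)$ fails while the arrangement in Proposition 3.6 of \cite{GK} succeeds, is accurate and is exactly the mechanism implicit in the paper's computation.
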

\begin{proof} By Section 7 of \cite{GS}, there is a pair $(x,y)$ of random variables in a $*$-probability space such that $\varphi(x)=\varphi(y)=0$ and $\kappa_2(x, y^*)=\kappa_2(x^*, y)=1$, and $\kappa_\chi(x,x^*, y, y^*)=0$ for all $\chi:\{1, ..., n\}\rightarrow \{l,r\}$ and $n\ge 3$. It implies that $$\kappa_2(y, x^*)=\varphi(yx^*)=\overline{\varphi(x,y^*)}=\overline{\kappa_2(x, y^*)}=1.$$ Similarly, $\kappa_2(y^*, x)=1$. Therefore,   its $R$-transform   $\mathcal{R}_{x, x^*, y, y*}=z_lz_r^*+z_rz_l^*+z_l^*z_r+z_r^*z_l$.  Let $\chi:(1,2)\mapsto (l,r)$, and $\hat{\chi}:(1, 2, 3, 4)\mapsto (l, l, r, r)$, $\hat{0}_\chi=\{\{1, 2\}, \{3, 4\}\}$ . By Theorem 9.1.5 in \cite{CNS2}, we have $$\kappa_\chi(xx^*, y^*y)=\sum_{\pi\in BNC(\hat{\chi}), \pi\vee \hat{0}_\chi=1_{\hat{\chi}}}\kappa_{\pi}(x, x^*, y^*,y)=k_{\hat{\chi}}(x, x^*, y^*, y)+\kappa_2(x^*, y)\kappa_2(x, y^*)=1,$$ where we used the fact that $\kappa_{\hat{\chi}}(x^*,y)=\kappa_2(x^*, y)$, $\kappa_{\hat{\chi}}(x,y^*)=\kappa_2(x, y^*)$, and the only partitions $\pi\in BNC(\hat{\chi})$ with possible non-zero contribution to the sum are those consisting of only even size blocks, since $(x,y)$ is $R$-diagonal. It implies that $(xx^*, yy^*)$ and $(x^*x, y^*y)$ are not bi-free.
\end{proof}
\section{$R$-diagonal pairs of left and right acting operators}

In this section, we focus on the study of $R$-diagonal pairs of left acting and right acting operators from finite von Neumann algebras in the standard form, giving characterizations of the $R$-diagonal pairs in terms of the $*$-distributions of the random variables, and the distributional invariance under multiplication by free unitaries.

Let $(\A,\varphi)$ be a $W^*$-probability space, that is, a von Neumann algebra $\A$ with a faithful normal tracial state $\varphi$ on $\A$. Represent $\A$ into $B(L^2(\A, \varphi))$ in two ways: $L:\A\rightarrow B(L^2(\A, \varphi))$ and $R:\A^{op}\rightarrow B(L^2(\A, \varphi))$, the left and, respectively, right multiplications of $\A$ on $\A\subset L^2(\A, \varphi)$. Define $\varphi(T)=\langle Te, e\rangle_\varphi, \forall T\in B(L^2(\A, \varphi))$, where $e\in \A$ is the identity operator in $\A$. Then $(B(L^2(\A,\varphi)), \varphi)$ is a $C^*$-probability space, and $L$ and $R$ are faithful $*$-representations of $\A$ and $\A^{op}$, respectively.

In this section, we always assume that $(\A, \varphi)$ is a $W^*$-probability space.

 Let $I$ and $J$ be two disjoint index sets, and $((z_i)_{i\in J}, (z_j)_{j\in J})$ a two-faced family of random variables in $\A$. Let $Z_i=L(z_i)$ for $i\in I$, and $Z_j=R(z_j)$ for $j\in J$.    Let $\chi:\{1, 2, \cdots, n\}\rightarrow I\bigsqcup J$.  The permutation $s_\chi $ (defined at the beginning of Section 2) induces a lattice isomorphism from $NC(n)$ onto $BNC(\chi)$ by $\pi \mapsto s_\chi\cdot\pi$, for $\pi\in NC(n)$, where $$s_\chi\cdot \pi=\{s_\chi\cdot V=\{s_\chi(t_1), s_\chi(t_2), \cdots, s_\chi(t_k)\}: V=\{t_1, t_2, \cdots, t_k\}\in \pi\}.$$ Thus, $\mu_{BNC}(s_\chi\circ \pi, 1_n)=\mu_{NC}(\pi, 1_n)$, for $\pi\in NC(n)$. For a subset $V=\{i_1< ...< i_p\}\subseteq \{1, ..., n\}$,  we define $$\varphi_V(z_{\chi(1)}, ..., z_{\chi(n)})=\varphi(z_{\chi(i_1)}\cdots z_{\chi(i_p)}).$$

  By the definitions of representations $L$ and $R$, we have $$\varphi(Z_{\chi(1)}\cdots Z_{\chi(n)})=\varphi(z_{\chi(s_\chi(1))}\cdots z_{\chi(s_\chi(n))}). \eqno (3.1)$$ For $ V=\{i_1<\cdots <i_k\}\subseteq \{1, 2, ..., n\}$, let $$s_\chi\circ V=\{s_\chi(i_1)\prec_\chi\cdots\prec_\chi s_\chi(i_k)\}=\{j_1<\cdots <j_k\}.$$ We then have
\begin{align*}
 \varphi_{s_\chi\circ V}(Z_{\chi(1)},..., Z_{\chi(n)})=&\varphi(Z_{\chi(j_1)}\cdots Z_{\chi(j_k)})\\
 =&\varphi(z_{\chi(s_\chi(i_1))}\cdots z_{\chi(s_\chi(i_k))})\\
 =&\varphi_V(z_{(\chi\circ s_\chi)(1))},...,z_{(\chi\circ s_\chi)(n)}),
\end{align*}
where $\chi\circ s_\chi: \{1, ..., n\}\rightarrow I\bigsqcup J$ is the composition of $s_\chi$ and $\chi$, that is, $\chi\circ s_\chi(i)=\chi(s_\chi(i))$, for $i=1, ..., n$.
  It implies that
\begin{align*}
\kappa_n(z_{(\chi\circ s_\chi)(1)}, ..., z_{(\chi\circ s_\chi)(n)})=&\sum_{\pi\in NC(n)}\varphi_\pi(z_{\chi(s_\chi(1))},..., z_{\chi(s_\chi(n))})\mu_{NC}(\pi, 1_n)\\
=&\sum_{\pi\in NC(n)}\prod_{V\in \pi}\varphi_{V}(z_{\chi(s_\chi(1))}, ..., z_{\chi(s_\chi(n))})\mu_{NC}(\pi,1_n)\\
=&\sum_{\pi\in NC(n)}\prod_{V\in\pi}\varphi_{s_\chi\circ V}(Z_{\chi(1)},..., Z_{\chi(n)})\mu_{NC}(\pi,1_n)\\
=&\sum_{\sigma\in BNC(\chi)}\varphi_\sigma(Z_{\chi(1)}, \cdots, Z_{\chi(n)})\mu_{BNC}(\sigma,1_n)\\
=&\kappa_\chi(Z_{\chi(1)}, \cdots, Z_{\chi(n)}).
\end{align*}
We thus have  $$\kappa_\chi(Z_{\chi(1)}, \cdots, Z_{\chi(n)})=\kappa_n(z_{(\chi\circ s_\chi)(1)}, \cdots, z_{(\chi\circ s_\chi)(n)})=\kappa_n(z_{\chi(s_\chi(1))}, \cdots, z_{\chi(s_\chi(n))}). \eqno (3.2)$$

The $*$-distribution of an $R$-diagonal random variable $a$ in a $*$-probability space $(\A, \varphi)$ is uniquely determined by the distributions of $a^*a$ and $aa^*$ (Corollary 15.7 in \cite{NS}). In the $R$-diagonal pair case, we have the following similar result.
\begin{Proposition} Let $x$ and  $y$ be random variables in $(\A,\varphi)$. If $(L(x),R(y))$ is $R$-diagonal, then the $*$-distribution of $(L(x),R(y))$ is determined by the distributions of $(xx^*, ..., yy^*), (x^*x, ..., y^*y)$,  where   $k_1$  arguments in the tuple $(x, x^*, ..., y, y^*)$ (or $(x^*, x, ..., y^*, y)$)  are from  $\{x, x^*\}$, and $k_2$  arguments from  $\{y,y^*\}$, $k_1, k_2\ge 0$, $k_1+k_2=2n, n=1, 2,...$. Precisely, for operators $x_1, x_2, y_1, y_2$ in $(\A,\varphi)$, if $$(L(x_1),R(y_1)), (L(x_2), R(y_2))\in B(L^2(\A,\varphi))$$ are $R$-diagonal, and $$\kappa_n(x_1x_1^*,..., y_1y_1^*)=\kappa_{n}(x_2x_2^*,..., y_2y_2^*), \kappa_n(x_1^*x_1,..., y_1^*y_1)=\kappa_n(x_2^*x_2,..., y_2^*y_2), \eqno (3.3)$$n=1, 2, ..., then $(L(x_1), R(y_1))$ and $(L(x_2), R(y_2))$ are identically $^*$-distributed.
\end{Proposition}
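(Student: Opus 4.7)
The plan is to reduce the $*$-distribution of $(L(x),R(y))$ to its determining sequences $\{\alpha_\chi\},\{\beta_\chi\}$ using $R$-diagonality and the identity $(3.2)$, and then to recover these sequences from the cumulants of the product elements $(xx^*,x^*x,yy^*,y^*y)$ recorded in $(3.3)$.

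First, by the bi-free moment-cumulant formula (Theorem 9.1.5 in \cite{CNS2}), the $*$-distribution of $(L(x),R(y))$ is determined by the family of bi-free cumulants $\kappa_\chi(Z_{\chi(1)},\dots,Z_{\chi(n)})$ over all $n\ge 1$ and all $\chi:\{1,\dots,n\}\to\{l,r\}$. By the $R$-diagonality hypothesis (Definition 2.1), each such cumulant vanishes unless $n$ is even and the $s_\chi$-reordered tuple $(z_{s_\chi(1)},\dots,z_{s_\chi(n)})$ is alternating in stars, so it suffices to control the determining sequences $\{\alpha_\chi\},\{\beta_\chi\}$. By $(3.2)$, each $\alpha_\chi$ (respectively $\beta_\chi$) equals a free cumulant in $(\A,\varphi)$ of an alternating word $(x^{\omega(1)},\dots,x^{\omega(k_1)},y^{\omega(k_1+1)},\dots,y^{\omega(2n)})$ with $\omega(1)=1$ (respectively $\omega(1)=*$).

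Next I would express these alternating free cumulants in terms of cumulants of the product elements $xx^*,x^*x,yy^*,y^*y$ via the product formula for free cumulants (Theorem 11.12 in \cite{NS}),
\[
\kappa_m(a_1b_1,\dots,a_mb_m)=\sum_{\substack{\sigma\in NC(2m)\\ \sigma\vee\hat{0}_m=1_{2m}}}\kappa_\sigma(a_1,b_1,\dots,a_m,b_m),
\]
applied to the grouping of the alternating word into $n$ adjacent pairs. When $k_1$ is even, every pair is one of $xx^*,x^*x,yy^*,y^*y$, and $R$-diagonality severely restricts the partitions $\sigma$ that can contribute; together with M\"obius inversion on $NC(2n)$ this yields an invertible triangular transformation between the $\alpha_\chi$'s and the free cumulants of product tuples appearing in $(3.3)$. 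Under hypothesis $(3.3)$ the invertibility then forces $\alpha_\chi^{(1)}=\alpha_\chi^{(2)}$ and analogously $\beta_\chi^{(1)}=\beta_\chi^{(2)}$ for every $\chi$, so $(L(x_1),R(y_1))$ and $(L(x_2),R(y_2))$ share all bi-free cumulants and hence are identically $*$-distributed.

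The main obstacle is the treatment of the mixed boundary pair $x^{\omega(k_1)}y^{\omega(k_1+1)}$ that appears when $k_1$ is odd, since this product is not among the four elements of $(3.3)$. I expect to resolve it by exploiting the tracial property of $\varphi$ (available since $\A$ is a $W^*$-probability space) to cyclically rotate the alternating word so that the mixed pair is absorbed into an all-$x$ or all-$y$ segment, combined with an induction on $n$ that reduces any remaining mixed-block contribution to a polynomial in cumulants of $(3.3)$ of smaller order.
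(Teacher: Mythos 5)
Your main line of argument coincides with the paper's: reduce via $(3.2)$ and $R$-diagonality to the quantities $\alpha_{1;k_1,k_2}=\kappa_{2n}(x,x^*,\dots,y,y^*)$ and $\alpha_{2;k_1,k_2}=\kappa_{2n}(x^*,x,\dots,y^*,y)$, apply the product formula $\kappa_n(z_1z_2,\dots,z_{2n-1}z_{2n})=\sum_{\pi\vee\sigma=1_{2n}}\kappa_\pi(z_1,\dots,z_{2n})$ to the pairwise grouping of the alternating word, use $R$-diagonality (all blocks of even size) to identify the contributing partitions with $NC(n)$ so that the block of $1$ yields an $\alpha_1$-factor and the remaining blocks yield $\alpha_2$-factors, and invert the resulting triangular system by induction on $n$. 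That is exactly the paper's proof.

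The one problem is your final paragraph. The hypothesis $(3.3)$ quantifies over all $k_1,k_2\ge 0$ with $k_1+k_2=2n$, and the tuple $(xx^*,\dots,yy^*)$ is by definition the pairwise grouping of the corresponding alternating word; when $k_1$ is odd this tuple contains the mixed product $x^{\omega(k_1)}y^{\omega(k_1+1)}$, so the cumulants of those mixed tuples are already part of the given data and no further argument is required --- the ``obstacle'' dissolves once $(3.3)$ is read this way. Moreover, the repair you sketch would not work even if it were needed: when $k_1$ is odd both the $x$-segment and the $y$-segment of the alternating word have odd length, so \emph{every} grouping of the $2n$ letters into adjacent pairs, cyclic or not, must contain at least one mixed pair; cyclic invariance of free cumulants under the trace only relocates that pair, it cannot absorb it into an all-$x$ or all-$y$ segment. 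So either you adopt the paper's reading of $(3.3)$, in which case your argument closes as written, or you are attempting the strictly stronger claim that the joint distribution of the four elements $xx^*,x^*x,yy^*,y^*y$ alone determines the $*$-distribution, which is not what the proposition asserts and is not established by your sketch.
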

\begin{proof} Let $\alpha_{1; k_1, k_2}=\kappa_{2n}(x, x^*, ..., y, y^*), \alpha_{2; k_1, k_2}=\kappa_{2n}(x^*, x, ..., y^*, y),$ with $k_1$ arguments from $\{x, x^*\}$ and $k_2$ arguments from $\{y, y^*\}$ in the tuple $$(x, x^*, ..., y, y^*), \text{or } (x^*, x, ..., y^*, y)$$ such that $k_1+k_2=2n$ and $k_1, k_2\ge 0$.
By $(3.2)$, the $*$-distribution of $(L(x),R(y))$ is determined by $\alpha_{1; k_1, k_2}$, and $\alpha_{2; k_1, k_2},$ for $k_1+k_2=2n, n=1, 2, ...$.

For a subset $V=\{i_1<i_2<...<i_k\}\subset \{1, ..., n\}$, let $k_1(V)$ and $k_2(V)$ be numbers of arguments from $\{x, x^*\}$ and, respectively,  arguments from $\{y, y^*\}$ in the tuple $(x, x^*,..., y, y^*)|_{\widetilde{V}}$ or $(x^*, x, ..., y^*, y)|_{\widetilde{V}}$, where $\widetilde{V}=\{2i_1-2, 2i_1-1, 2i_2-2, 2i_2-1,..., 2i_k-2, 2i_k-1\}$, if $i_1\ne 1$; $\widetilde{V}=\{1,  2i_2-2, 2i_2-1,..., 2i_k-2, 2i_k-1, 2n\}$, if $i_1=1$.
Moreover, the mapping $V\mapsto \widetilde{V}$, $\pi\mapsto \widetilde{\pi}=\{\widetilde{V}:V\in \pi\}$, induces a bijection from $NC(n)$ onto the following set
 $$\P=\{\pi\in NC(2n): 1\sim_{\pi}2n, 2\sim_{\pi}3, ..., 2n-2\sim_{\pi}2n-1\}$$ (see the discussion on the top of Page 189 in \cite{NS}).

 Note that for a block $V\in \pi$, $\pi\in NC(2n)$, $\kappa_{V}(x, x^*, ..., y, y^*)$ has one of the following forms $$\kappa_{|V|}(x^{\omega(1)},... x^{\omega(k)}, y^{\omega(k+1)}..., x^{\omega(|V|)}), 0\le k\le n, \omega:\{1,..., n\}\rightarrow \{1, *\}.$$ It implies from (3.2) that $\kappa_V(x, x^*, ..., y^*)=\kappa_\chi(Z_1, ..., Z_{|V|})$, for some $\chi:\{1, ..., |V|\}\rightarrow \{l,r\}$, $$Z_i\in\left\{\begin{matrix} \{L(x), L(x^*)\}, & \text{if } \chi(i)=l,\\
    \{R(y), R(y^*)\}, & \text{if }\chi(i)=r,\end{matrix}\right.  \hspace{4mm} i=1, ..., |V|.$$

 It follows that $\kappa_V(x, ..., y^*)=0$ if $|V|$ is odd, since $(L(x), R(y))$ is $R$-diagonal. Let $$\sigma=\{\{1,2\}, \{3, 4\}, ..., \{2n-1, n\}\}.$$  By Proposition 2.11 in \cite{GK}, $$\{\pi\in NC(2n): \pi\vee \sigma=1_{2n}, |V| \text{ is even},  \forall V\in \pi \}=\P.$$
  We thus have
\begin{align*}
\kappa_{n}(xx^*,...,yy^*)=&\sum_{\pi\in NC(2n), \pi\vee\sigma=1_{2n}}\kappa_\pi(x, x^*,..., y, y^*)\\
=&\sum_{\pi\in \P}\kappa_\pi(x, x^*,..., y, y^*)\\
=&\sum_{\pi\in NC(n), 1\in V_1, \pi=\{V_1, ..., V_d\}}\alpha_{1;, k_1(V_1), k_2(V_1)}\alpha_{2, k_1(V_2), k_2(V_2)}\cdots \alpha_{2; k_1(V_d), k_2(V_d)}\\
=&\alpha_{1, k_1, k_2}\\
+&\sum_{\pi\in NC(n),\pi\ne 1_{n}, 1\in V_1, \pi=\{V_1, ..., V_d\}}\alpha_{1;, k_1(V_1), k_2(V_1)}\alpha_{2, k_1(V_2), k_2(V_2)}\cdots \alpha_{2; k_1(V_d), k_2(V_d)},
\end{align*}
It implies that $$\kappa_{2n}(x, x^*,..., y, y^*)=\kappa_n(xx^*,..., yy^*)$$
$$-\sum_{\pi\in NC(n)\setminus \{1_n\},1\in V_1, \pi=\{V_1, ..., V_d\}}\alpha_{1;, k_1(V_1), k_2(V_1)}\alpha_{2, k_1(V_2), k_2(V_2)}\cdots \alpha_{2; k_1(V_d), k_2(V_d)}. \eqno (3.4)$$ Very Similarly, we have $$\kappa_{2n}(x^*, x, ..., y^*, y)=\kappa_n(x^*x, ..., y^*y)$$
$$-\sum_{\pi\in NC(n)\setminus \{1_n\}, 1\in V_1, \pi=\{V_1, ..., V_d\}}\alpha_{2;, k_1(V_1), k_2(V_1)}\alpha_{1, k_1(V_2), k_2(V_2)}\cdots \alpha_{1; k_1(V_d), k_2(V_d)}.\eqno (3.5)$$
When $n=1$, by (3.4) and (3.5), $\kappa_2(x,x^*)=\kappa(xx^*), \kappa_2(x,y^*)=\kappa(xy^*), \kappa_2(y,y^*)=\kappa(yy^*)$, and $\kappa_2(x^*,x)=\kappa(x^*x), \kappa_2(x^*,y)=\kappa(x^*y), \kappa_2(y^*,y)=\kappa(y^*y)$. Suppose that there are polynomials $P_{m, k_1, k_2}$ and $Q_{m, k_1, k_2}$, independent of the choice of $x$ and $y$, such that  $$\alpha_{1; k_1, k_2}=P_{m, k_1, k_2}(\kappa_i(xx^*,..., yy^*),\kappa_i(x^*x, ..., y^*y), i=1, 2, ..., m),$$
$$ \alpha_{2; k_1, k_2}=Q_{m, k_1, k_2}(\kappa_i(xx^*,..., yy^*),\kappa_i(x^*x, ..., y^*y), i=1, 2, ..., m),$$  for $k_1+k_2=2m<2n$. By (3.4) and (3.5), there are polynomials $P_{n, k_1, k_2}$ and $Q_{n, k_1, k_2}$, independent of the choice of $x$ and $y$,  such that $$\alpha_{1; k_1, k_2}=P_{n, k_1, k_2}(\kappa_i(xx^*,..., yy^*),\kappa_i(x^*x, ..., y^*y), i=1, 2, ..., n),$$ and
$$ \alpha_{2; k_1, k_2}=Q_{n, k_1, k_2}(\kappa_i(xx^*,..., yy^*),\kappa_i(x^*x, ..., y^*y), i=1, 2, ...,n),$$  for $k_1+k_2=2n$. It follows from $(3.2)$ and $(3.3)$ that $(L(x_1), R(y_1))$ and $(L(x_2), R(y_2))$ are identically $*$-distributed.

\end{proof}

We will use the following sets
\begin{align*}
P_{1,1}(x,y)=&\{x^{\omega(1)}x^{\omega(2)}\cdots x^{\omega(k_1)}y^{\omega(k_1+1)}\cdots y^{\omega(k_2)}:k_1+k_2=2n-1, k_1, k_2\ge 0,\\
 &\omega:\{1, ..., 2n-1\}\rightarrow \{1,*\}, \omega(1)=*, \omega(i)\ne \omega(i+1), i=1, 2, ..., 2n-2, n=1, 2,...\},\\
P_{1,2}(x,y)=&\{z-\varphi(z):z=x^{\omega(1)}x^{\omega(2)}\cdots x^{\omega(k_1)}y^{\omega(k_1+1)}\cdots y^{\omega(k_2)}, k_1+k_2=2n, k_1, k_2\ge 0,\\
 &\omega:\{1, ..., 2n\}\rightarrow \{1,*\}, \omega(1)=*, \omega(i)\ne \omega(i+1), i=1, 2, ..., 2n-1, n=1, 2,...\},
 \end{align*}
 and
 \begin{align*}
 P_{2,2}(x,y)=&\{x^{\omega(1)}x^{\omega(2)}\cdots x^{\omega(k_1)}y^{\omega(k_1+1)}\cdots y^{\omega(k_2)}:k_1+k_2=2n-1, k_1, k_2\ge 0,\\
 &\omega:\{1, ..., 2n-1\}\rightarrow \{1,*\}, \omega(1)=1, \omega(i)\ne \omega(i+1), i=1, 2, ..., 2n-2, n=1, 2,...\},\\
P_{2,1}(x,y)=&\{z-\varphi(z): z=x^{\omega(1)}x^{\omega(2)}\cdots x^{\omega(k_1)}y^{\omega(k_1+1)}\cdots y^{\omega(k_2)}, k_1+k_2=2n, k_1, k_2\ge 0,\\
 &\omega:\{1, ..., 2n\}\rightarrow \{1,*\}, \omega(1)=1, \omega(i)\ne \omega(i+1), i=1, 2, ..., 2n-1, n=1, 2,...\}.
 \end{align*}
 \begin{Theorem} A pair $(L(x),R(y))$ of elements in  $B(L^2(\A, \varphi))$ is $R$-diagonal if and only if  $$\varphi(p_{i_1,i_2}p_{i_2, i_3}\cdots p_{i_{n-1}, i_n})=0,\eqno (3.6)$$ for $p_{i_k, i_{k+1}}\in P_{i_k, i_{k+1}}$, $i_k, i_{k+1}\in \{1,2\}$,   $j=1,2,...,n-1$, with the following conditions
   \begin{enumerate}
   \item there exists a number $m$ such that $p_{i_1, i_2},..., p_{i_m,i_{m+1}}$ only contain  factors of $x, x^*$ and $$p_{i_{m+1},i_{m+2}},..., p_{i_{n-1},i_n}$$ only contain factors of $y, y^*$, where $0\le m\le n-1$, or
   \item there exists a number $m$ such that $p_{i_1, i_2},..., p_{i_{m-1},i_{m}}$ only contain  factors of $x, x^*$, $p_{i_m, i_{m+1}}$ contains both factors from $\{x,x^*\}$ and $\{y, y^*\}$, and $p_{i_{m+1},i_{m+2}},..., p_{i_{n-1},i_n}$ only contain factors of $y, y^*$,
   \end{enumerate} $k=1, 2, ..., n-1$, and $n=2, 3,  ...$.
 \end{Theorem}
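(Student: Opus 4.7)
The plan is to translate the bi-free $R$-diagonal condition into a purely free-cumulant condition on $\{x,x^*,y,y^*\}$-tuples in $(\A,\varphi)$ via the identity (3.2), and then to adapt the single-variable moment characterization of Nica, Shlyakhtenko, and Speicher (Condition (1) of Theorem 1.2 in \cite{NSS}) to this bi-free setting. I would start by observing that, by (3.2), $(L(x),R(y))$ is $R$-diagonal if and only if the free cumulant $\kappa_n(w_1,\ldots,w_n)$ in $(\A,\varphi)$ with $w_i\in\{x,x^*,y,y^*\}$ vanishes unless $n=2m$ and the tuple has the form
$$(x^{\omega(1)},\ldots,x^{\omega(k)},y^{\omega(k+1)},\ldots,y^{\omega(2m)})$$
with $\omega$ alternating in star/non-star throughout (for some $0\le k\le 2m$). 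This moves the whole question out of $B(L^2(\A,\varphi))$ and into $(\A,\varphi)$ itself.

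For the forward direction, I would expand $\varphi(p_{i_1,i_2}\cdots p_{i_{n-1},i_n})$ by the free moment-cumulant formula as a sum over non-crossing partitions of the letter positions, and then argue combinatorially that the shared-index condition on consecutive polynomials forces every non-crossing partition to contain some block whose induced subtuple is not of the allowed alternating form; such a block contributes a vanishing cumulant by the reformulation above. The centering $z-\varphi(z)$ built into $P_{1,2}$ and $P_{2,1}$ is precisely what is needed to kill the singleton contributions that would otherwise survive. The structural constraints (1) and (2) in the statement correspond to the fact that in any cumulant permitted by the $R$-diagonal condition, every $x$- or $x^*$-factor precedes every $y$- or $y^*$-factor, so that only an initial run of $x$-polynomials, one optional mixed transition polynomial, and a final run of $y$-polynomials are compatible with a single admissible cumulant surviving in the expansion.

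For the converse, I would induct on the length $n$ of a ``bad'' tuple $(w_1,\ldots,w_n)$, constructing a concrete product of polynomials in $\bigcup_{i,j\in\{1,2\}}P_{i,j}(x,y)$ whose moment-cumulant expansion has $\kappa_n(w_1,\ldots,w_n)$ as its ``leading'' term and all remaining contributions indexed by non-crossing partitions with strictly finer blocks. These remaining contributions are either zero by the inductive hypothesis (because they correspond to bad cumulants of smaller length) or zero because they correspond to cumulants of the allowed alternating form multiplied together but then combined with a factor $\varphi$-centered to zero. The base cases $n=1$ and $n=2$ follow directly from the hypothesis applied to the singleton elements of $P_{1,1},P_{2,2}$ and to the centered even-length words in $P_{1,2},P_{2,1}$.

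The main obstacle will be the ``transition'' polynomial allowed in case (2), which contains both $x$- and $y$-factors. This case cannot be collapsed onto the single-variable NSS argument applied separately to $x$ and to $y$, because an $R$-diagonal pair permits non-vanishing mixed cumulants between $x$ and $y$ as long as they respect the one-sided alternating form. Carefully tracking the type-matching at the junction where the concatenated word switches from $x$-letters to $y$-letters, and verifying that the indices $i_k\in\{1,2\}$ in the statement correctly encode these matchings, is the combinatorial bookkeeping on which both directions ultimately rest.
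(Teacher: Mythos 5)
Your route is genuinely different from the paper's: you propose a direct non-crossing-partition argument entirely inside $(\A,\varphi)$, whereas the paper never touches the partition combinatorics of $(3.6)$ directly. Instead it adjoins a Haar unitary $u$ that is $*$-free from $\{x,y\}$, observes that the sets $P_{i,j}(ux,uy)$ are obtained from $P_{i,j}(x,y)$ by inserting $u^{\pm 1}$ exactly at the junctions prescribed by the index-chaining, so that $(3.6)$ for $(ux,uy)$ is an immediate consequence of freeness of centered alternating products, and then transfers everything back and forth through the equivalence (Corollary 2.18 and Theorem 4.4 of \cite{GK}) between $R$-diagonality of $(L(x),R(y))$ and invariance of its $*$-distribution under $x\mapsto ux$, $y\mapsto uy$. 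The converse in the paper is likewise an induction showing that $(3.6)$ forces this distributional invariance, not a direct proof that bad cumulants vanish.

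The concrete gap in your sketch is the claimed vanishing mechanism in the forward direction: it is not true that every non-crossing partition of the letter positions contains a block whose induced subtuple fails to be of the allowed alternating form. Take $p=x^*x-\varphi(x^*x)1\in P_{1,2}(x,y)$ and $q=xx^*-\varphi(xx^*)1\in P_{2,1}(x,y)$; the chain $pq$ is admissible, and $\varphi(pq)=\varphi(x^*xxx^*)-\varphi(x^*x)\varphi(xx^*)$. In the expansion $\varphi(x^*xxx^*)=\sum_{\pi\in NC(4)}\kappa_\pi(x^*,x,x,x^*)$ the partition $\{\{1,2\},\{3,4\}\}$ has both blocks even and alternating, and its contribution $\kappa_2(x^*,x)\kappa_2(x,x^*)$ is generically nonzero. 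The identity $\varphi(pq)=0$ holds only because the surviving ``all-good-block'' partitions reassemble, via the moment--cumulant formula applied to each factor, into exactly the product of the subtracted constants $\varphi(x^*x)\varphi(xx^*)$. Your remark that the centering ``kills the singleton contributions'' misdiagnoses this: singletons already vanish because odd cumulants of an $R$-diagonal pair vanish; what the centering must cancel are whole products of nonzero even alternating cumulants. The same unaddressed cancellation reappears in your converse, where the ``remaining contributions'' in the inductive step include products of good cumulants that do not vanish and must instead be matched against the scalar terms $\lambda_r$ coming from the interval decomposition of the word. Until that telescoping between good partitions and centering constants is carried out explicitly (or circumvented, as the paper does, by the Haar-unitary conjugation), neither direction of your argument closes.
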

 \begin{proof}
  Suppose that $(L(x),R(y))$ is $R$-diagonal. Let $u$ be a Haar unitary, and $\{u\}$ and $\{x,y\}$ be $*$-free in $(\A, \varphi)$.

  We first prove $(3.6)$ for $(ux, uy)$. In this case, $$p_{1,1}(u)=p_{1,1}u^*, p_{1,2}(u)=p_{1,2}, p_{2,1}(u)=up_{2,1}u^*, p_{2,2}(u)=up_{2,2},$$ where $p_{i,j}(u)\in P_{i,j}(ux,uy)$, $p_{i,j}\in P_{i,j}(x,y)$. Note that $\varphi(p_{1,1})=\varphi(p_{2,2})=0$, since $(L(x),R(y))$ is $R$-diagonal. Moreover, the product $p_{i,j}(u)p_{j,l}(u)$ is obtained by formally putting the two quantities in the  ordered pair $(p_{i,j}(u), p_{j,l}(u))$ together without any cancellations. Precisely, we have
   \begin{align*}p_{1,1}(u)p_{1,1}(u)=p_{1,1}u^*p_{1,1}u^*,&\ \  p_{1,1}(u)p_{1,2}(u)=p_{1,1}u^*p_{1,2},\\
   p_{2,1}(u)p_{1,1}(u)=up_{2,1}u^*p_{1,1}u^*,& \ \ p_{2,1}(u)p_{1,2}(u)=up_{2,1}u^*p_{1,2},\\
    p_{1,2}(u)p_{2,1}(u)=p_{1,2}up_{2,1}u^*, &\ \ p_{2,2}(u)p_{2,1}(u)=up_{2,2}up_{2.1}u^*, \\
  p_{2,2}(u)p_{2,2}(u)=up_{2,2}up_{22}, & \ \ p_{1,2}(u)p_{2,2}(u)=p_{1,2}up_{2,2}.
  \end{align*}
  It implies that  elements from $\bigcup_{i,j=1}^2 P_{i,j}(x,y)$ and  elements from $\{u, u^*\}$ appear alternatingly in the product $$p_{i_1, i_2}(u) p_{i_2, i_3}(u) \cdots p_{i_{n-1}, i_n}(u).$$ Then (3.6)  follows for $p_{i,j}(u)$'s, since $\varphi(p_{i,j})=\varphi(u)=\varphi(u^*)=0$, for $i, j=1, 2$, and $\{u\}$ and $\{x,y\}$ are $*$-free.

 By $(3.1)$, we get  $$\varphi(x^{\omega(1)}\cdots x^{\omega(k)}y^{\omega(k+1)}\cdots y^{\omega(n)})=\varphi(Z_{\chi(1)}\cdots Z_{\chi(n)}),\eqno(3.7)$$ where $\chi:\{1, ..., n\}\rightarrow \{l^*, l, r, r^*\}$, $Z_{l^*}=L(x^*), Z_{l}=L(x), Z_{r^*}=R(y^*)$ and $Z_r=R(y)$, $\omega:\{1, ..., n\}\rightarrow \{*, 1\}$, $\chi$ is chosen so that $$\varphi(x^{\omega(1)}\cdots x^{\omega(k)}y^{\omega(k+1)}\cdots y^{\omega(n)})=\varphi(z_{\chi(s_\chi(1))}\cdots z_{\chi(s_\chi(n))}),$$ where $Z_{\chi(i)}=L(z)$, if $\chi(i)\in\{l, l^*\}$; $Z_{\chi(i)}=R(z)$, if $\chi(i)\in\{r, r^*\}$, for $i=1,..., n$ (see the discussion at the beginning of this section). Let $$Z_{\chi(i)}(u)=\left\{\begin{matrix}L(ux), &\text{ if } \chi(i)=l,\\
L(x^*u^*), & \text{ if } \chi(i)=l^*,\\
R(uy), &     \text{ if } \chi(i)=r,\\
R(y^*u^*), & \text{ if } \chi(i)=r^*,
\end{matrix}
\right. \hspace{4mm} i=1, ..., n. $$ Similarly, by (3.1), we have $$\varphi((ux)^{\omega(1)}\cdots (ux)^{\omega(k)}(uy)^{\omega(k+1)}\cdots (uy)^{\omega(n)})=\varphi(Z_{\chi(1)}(u)\cdots Z_{\chi(n)}(u)).\eqno(3.8)$$

By Corollary 2.18 and Theorem 4.4 in \cite{GK}, $(L(x), R(y))$ and $(L(ux), R(uy))$ are identically $*$-distributed. It implies from this fact and $(3.7)$ and $(3.8)$ that $$\varphi(p_{i_1,i_2}p_{i_2, i_3}\cdots p_{i_{n-1}, i_n})=\varphi(p_{i_1,i_2}(u)p_{i_2, i_3}(u)\cdots p_{i_{n-1}, i_n}(u)),$$ where $p_{ij}$ and $p_{ij}(u)$ are chosen to satisfy the conditions in the statement of this theorem.  We have proved that $(ux, uy)$ satisfies $(3.6)$. Hence, $(x,y)$ satisfies $(3.6)$.

  Conversely, suppose that $(x,y)$ satisfies $(3.6)$.    Let $u$ be a Haar unitary, and $\{u\}$ and $\{x,y\}$ be $*$-free in $(\A, \varphi)$.  By Corollary 2.18 and Theorem 3.2 in \cite{GK}, and the first part of the current proof, $(ux, uy)$ satisfies $(3.6)$.   It is obvious that
   $$\varphi(\underbrace{(ux)(ux)^*\cdots (uy)(uy)^*}_{2n})=\varphi(uxx^*xx^*\cdots yy^*u^*)=\varphi(\underbrace{xx^*\cdots yy^*}_{2n}),$$
  $$\varphi(\underbrace{(ux)^*(ux)\cdots (uy)^*(uy)}_{2n})=\varphi(\underbrace{x^*x\cdots y^*y}_{2n}),  n=1,2,...\eqno (3.9)$$
  For $\chi:\{1,2,..., n\}\rightarrow \{l,r\}$ and $\omega:\{1,2,..., n\}\rightarrow \{1,*\}$, $n\in\mathbb{N}$, let
  $$z_k=\left\{\begin{matrix}
  x,&\omega(k)=1, \chi(k)=l\\
  x^*, & \omega(k)=*, \chi(k)=l\\
  y,& \omega(k)=1, \chi(k)=r\\
  y^*,&\omega(k)=1, \chi(k)=l\\
  \end{matrix},\right. \hspace{3mm}
  w_k=\left\{\begin{matrix}
  uz_k,&\omega(k)=1\\
  z_k^*u^*, & \omega(k)=*
  \end{matrix},\right.
  $$
  $$Z_k=L(z_k), W_k=L(w_k), \text{ if } \chi(k)=l; Z_k=R(z_k), W_k=R(w_k), \text{ if } \chi(k)=r,\eqno (3.10)$$
   for $k=1,..., n$.
   We  prove that $$\varphi(Z_1\cdots Z_n)=\varphi(W_1\cdots W_n).$$
  By $(3.1)$, it sufficient to prove $$\varphi(z_{s_\chi(1)}\cdots z_{s_\chi(n)})=\varphi(w_{s_\chi(1)}\cdots w_{s_\chi(n)}),$$ that is, $$\varphi(x^{\omega_1(1)}\cdots x^{\omega_1(k)} y^{\omega_1(k+1)}\cdots y^{\omega(n)})=\varphi((ux)^{\omega_1(1)}\cdots (ux)^{\omega_1(k)} (uy)^{\omega_1(k+1)}\cdots (uy)^{\omega_1(n)}), \eqno (3.11)$$ where $0\le k\le n, \omega_1(i)=\omega(s_\chi(i))$, for $i=1, 2, ..., n$.

  When $n=1$, $\varphi(x^{\omega(1)})=0=\varphi(y^{\omega(1)})=\varphi((ux)^{\omega(1)})=\varphi((uy)^{\omega(1)})$, since $\{x,y\}$ and $\{ux, uy\}$ satisfy $(3.6)$. Suppose that $(3.11)$ is true when  $n<m$. Now we prove $(3.11)$ when $n=m$.

  We adopt some ideas from the proof of Proposition 2.1 in \cite{NSS}. We take the product  $$x^{\omega_1(1)}\cdots x^{\omega_1(k)} y^{\omega_1(k+1)}\cdots y^{\omega(n)}$$ and form an interval partition of the ordered set of $n$ factors of the product   by the following rule.   The $i$-th factor and the $i+1$-th factor are in two adjacent blocks  for $1\le i<n$ if $\omega_1(i)=\omega_1(i+1)$. Then we have $$x^{\omega_1(1)}\cdots x^{\omega_1(k)} y^{\omega_1(k+1)}\cdots y^{\omega(n)}=\prod_{r=1}^t(p_{j_r, j_{r+1}}+\lambda_r),$$
 for some $t\ge 1$, $j_1, j_2, ..., j_{t+1}\in \{1, 2\}$, where $p_{i,j}\in P_{i,j}(x,y)$, $\lambda_r$ is determined as follows:  if $j_r=j_{r+1}$, then $\lambda_{r}=0$; if $j_r\ne j_{r+1}$, $\lambda_r=\varphi(z)$, where $p_{j_r, j_{r+1}}=z-\varphi(z)$. Similarly, we have
 $$(ux)^{\omega_1(1)}\cdots (ux)^{\omega_1(k)} (uy)^{\omega_1(k+1)}\cdots (uy)^{\omega(n)}=\prod_{r=1}^t(p_{j_r, j_{r+1}}(u)+\lambda_r(u)),$$ where $\lambda_r(u)=\lambda_r$, for $r=1,..., t+1$, by $(3.9)$.
 It implies from $(3.6)$ that
\begin{align*}
&\varphi(x^{\omega_1(1)}\cdots x^{\omega_1(k)} y^{\omega_1(k+1)}\cdots y^{\omega(n)})\\
=&\varphi(\prod_{r=1}^{r=t}p_{j_r, j_{r+1}})\\
+&\sum_{\emptyset \ne S\subseteq \{1,..., t\}}(\prod_{r\in S}\lambda_r)\varphi(\prod_{r\in \{1,..., t\}\setminus S}p_{j_r, j_{r+1}})\\
=&\sum_{\emptyset \ne S\subseteq \{1,..., t\}}(\prod_{r\in S}\lambda_r)\varphi(\prod_{r\in \{1,..., t\}\setminus S}p_{j_r, j_{r+1}}),
\end{align*}
\begin{align*}
&\varphi((ux)^{\omega_1(1)}\cdots (ux)^{\omega_1(k)} (uy)^{\omega_1(k+1)}\cdots (uy)^{\omega(n)})\\
=&\varphi(\prod_{r=1}^{r=t}p_{j_r, j_{r+1}}(u))\\
+&\sum_{\emptyset \ne S\subseteq \{1,..., t\}}(\prod_{r\in S}\lambda_r)\varphi(\prod_{r\in \{1,..., t\}\setminus S}p_{j_r, j_{r+1}}(u))\\
=&\sum_{\emptyset \ne S\subseteq \{1,..., t\}}(\prod_{r\in S}(\lambda_r))\varphi(\prod_{r\in \{1,..., t\}\setminus S}p_{j_r, j_{r+1}}(u)),
\end{align*}
where the multiplication orders in $\prod_{r\in \{1,..., t\}\setminus S}p_{j_r, j_{r+1}}$ and $\prod_{r\in \{1,..., t\}\setminus S}p_{j_r, j_{r+1}}(u)$ are derived  from $p_{j_1,j_2}p_{j_2, j_2}\cdots p_{j_t, j_{t+1}}$ and, respectively, from  $p_{j_1,j_2}(u)p_{j_2, j_2}(u)\cdots p_{j_t, j_{t+1}}(u)$ by removing the factors with indices in $S$. Therefore,  $\prod_{r\in \{1,..., t\}\setminus S}p_{j_r, j_{r+1}}$ is a linear combination of terms $$v_p:=x^{\omega_1(1)}\cdots x^{\omega_1(k)} y^{\omega_1(k+1)}\cdots y^{\omega(p)}$$ with length $p$ less that $n$, i. e,, $\prod_{r\in \{1,..., t\}\setminus S}p_{j_r, j_{r+1}}=\sum_{j=1}^d\beta_jv_{p_j}$. Similarly, $$\prod_{r\in \{1,..., t\}\setminus S}p_{j_r, j_{r+1}}=\sum_{j=1}^d\beta_jv_{p_j}(u).$$ By the inductive hypothesis, we have $$\varphi(\prod_{r\in \{1,..., t\}\setminus S}p_{j_r, j_{r+1}})=\varphi(\prod_{r\in \{1,..., t\}\setminus S}p_{j_r, j_{r+1}}(u)).$$ It implies that $(3.11)$ is true when $n=m$. We have proved that $\varphi(Z_1\cdots Z_n)=\varphi(W_1\cdots W_n)$, for $n=1,2,...$, which means that  $\{L(x),R(y)\}$ and $(L(ux),R(uy))$ have the same $*$-distribution. By  Theorem 4.4 in \cite{GK},  $(L(x), R(y))$, is $R$-diagonal.
 \end{proof}
Summarizing Theorem 3.2 and the work in \cite{GK}, we get the following result, which characterizes $R$-diagonal pairs of left and right operators in terms of  the $*$-moments of the random variables, and of the distributional invariance of the random variables under multiplication by free (Haar) unitaries. The following theorem is a bi-free analogue of the main result (Theorem and Definition 1.2) of \cite{NSS}.
\begin{Theorem} For $x,y\in \A$, the following statements are equivalent.
\begin{enumerate}
\item The pair $(L(x), R(y))$ is $R$-diagonal in $(B(L^2(\A,\varphi)),\varphi)$.
\item The elements $x$ and $y$ satisfy $(3.6)$.
\item Let $u$ be a Haar unitary, and $u$ and $\{x,y\}$ are $*$-free in $\A$. Then $(L(x),R(y))$ and $(L(ux), R(uy))$ have the same $*$-distribution.
\item Let $u$ be a  unitary in $\A$, and $u$ and $\{x,y\}$ are $*$-free in $\A$. Then $(L(x),R(y))$ and $(L(ux), R(uy))$ have the same $*$-distribution.
\end{enumerate}
\end{Theorem}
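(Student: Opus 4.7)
The plan is to establish $(1)\Leftrightarrow (2)\Leftrightarrow (3)\Leftrightarrow (4)$ by combining Theorem 3.2, Proposition 3.1, and an argument using freeness together with traciality. The equivalence $(1)\Leftrightarrow (2)$ is exactly Theorem 3.2, and the equivalence $(1)\Leftrightarrow (3)$ is already embedded in the proof of Theorem 3.2: its forward direction invokes Corollary 2.18 and Theorem 4.4 of \cite{GK} to obtain $R$-diagonal $\Rightarrow$ Haar invariance, while its converse uses that same invariance together with the Katsimpas machinery on bi-Haar-shifted pairs to return to $(1)$. The implication $(4)\Rightarrow (3)$ is immediate since every Haar unitary is a unitary, so the only substantive remaining step is $(1)\Rightarrow (4)$.

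For $(1)\Rightarrow (4)$, fix an arbitrary unitary $u\in \A$ that is $*$-free from $\{x,y\}$. I plan to verify the hypotheses of Proposition 3.1 for the pair $(L(ux),R(uy))$, which then forces $(L(ux),R(uy))\sim (L(x),R(y))$. Two ingredients are needed: (a) that $(L(ux),R(uy))$ is itself $R$-diagonal, and (b) that the determining cumulant sequences $\kappa_n((ux)(ux)^*,\ldots,(uy)(uy)^*)$ and $\kappa_n((ux)^*(ux),\ldots,(uy)^*(uy))$ agree with $\kappa_n(xx^*,\ldots,yy^*)$ and $\kappa_n(x^*x,\ldots,y^*y)$ respectively.

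Ingredient (b) is immediate from traciality. Since $(ux)(ux)^*=uxx^*u^*$, $(uy)(uy)^*=uyy^*u^*$, $(ux)^*(ux)=x^*x$, and $(uy)^*(uy)=y^*y$, and since $\varphi$ on the finite von Neumann algebra $\A$ in standard form is tracial, cycling $u$ and $u^*$ through any mixed product shows that every joint $\varphi$-moment of $\{uxx^*u^*,uyy^*u^*\}$ equals the corresponding moment of $\{xx^*,yy^*\}$, and the free cumulant sequences therefore coincide; the analogous computation handles the $(ux)^*(ux),\ldots,(uy)^*(uy)$ side.

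Ingredient (a), and thus the main obstacle, is to show that $(ux,uy)$ satisfies $(3.6)$, whence $(L(ux),R(uy))$ will be $R$-diagonal by $(1)\Leftrightarrow (2)$. The decompositions $p_{1,1}(u)=p_{1,1}u^*$, $p_{1,2}(u)=p_{1,2}$, $p_{2,1}(u)=up_{2,1}u^*$, $p_{2,2}(u)=up_{2,2}$ derived inside the proof of Theorem 3.2 depend only on $u^*u=uu^*=1$, so they remain valid for any unitary $u$; in particular each product $p_{i_1,i_2}(u)\cdots p_{i_{n-1},i_n}(u)$ is still an alternating word in $\{u,u^*\}$ and elements of the $*$-algebra generated by $\{x,y\}$. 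I will evaluate the $\varphi$-value of this product by expanding each $u^{\pm 1}$ as its centered part plus its scalar expectation: the fully centered contribution vanishes by the alternating-centered vanishing rule of free independence combined with $(3.6)$ for $(x,y)$, while each contribution in which some $u^{\pm 1}$ is replaced by its scalar expectation collapses, via $u^*u=1$ and the traciality of $\varphi$, to a $\varphi$-value of an alternating product in $\mathrm{alg}(x,y)$ that is itself controlled by $(3.6)$ for $(x,y)$. The delicate point, and the reason the Haar case of Theorem 3.2 was easier, is that for a general unitary $u$ the moments $\varphi(u^k)$ do not vanish for $k\neq 0$, so many more partial-scalarization terms survive and must be carefully tracked; the traciality of $\varphi$ and the $u^*u=1$ cancellations are what ultimately absorb every surviving term back into $(3.6)$ for $(x,y)$. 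Once $(3.6)$ is verified for $(ux,uy)$, Proposition 3.1 concludes.
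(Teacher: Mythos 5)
Your treatment of $(1)\Leftrightarrow(2)$, $(1)\Leftrightarrow(3)$ and $(4)\Rightarrow(3)$ coincides with the paper's. The substantive issue is your route to $(1)\Rightarrow(4)$, where you propose to verify $(3.6)$ for $(ux,uy)$ directly and then invoke Proposition 3.1; the crucial ingredient (a) is left as a sketch, and it is precisely there that the argument has a genuine gap.

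The problem is your claim that, after writing each $u^{\pm1}$ in the alternating word $p_{i_1,i_2}(u)\cdots p_{i_{n-1},i_n}(u)$ as its centered part plus $\varphi(u^{\pm1})$, every partial-scalarization term is ``absorbed back into $(3.6)$ for $(x,y)$.'' For a Haar unitary these terms vanish because $\varphi(u^{\pm1})=0$ --- that is exactly why the forward direction of Theorem 3.2 works --- but for a general free unitary they survive, and they are \emph{not} of the form controlled by $(3.6)$. Concretely, scalarizing the $u^*$ in $p_{1,1}(u)p_{1,2}(u)=p_{1,1}u^*p_{1,2}$ produces $\varphi(u^*)\,p_{1,1}p_{1,2}$, and since every word in $P_{1,1}$ ends in a starred letter while every word underlying $P_{1,2}$ begins with one, the concatenation $p_{1,1}p_{1,2}$ contains non-alternating monomials such as $x^*x^*x$. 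Condition $(3.6)$ says nothing about expectations of products involving such monomials, so the asserted absorption does not follow from $(3.6)$ together with traciality and $u^*u=1$; one would have to invoke and prove further vanishing properties of the $R$-diagonal pair (and track how the merged factors recombine across all subsets of scalarized $u$'s), none of which is carried out. Your ingredient (b), the matching of the determining cumulants via traciality, is fine, but without (a) Proposition 3.1 cannot be applied.

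The paper avoids this difficulty entirely. It adjoins an auxiliary Haar unitary $v$ with $\{v\}$, $\{u\}$, $\{x,y\}$ $*$-free, notes that $(L(u),R(u))$ and $(L(vx),R(vy))$ are $*$-bi-free, and uses the bi-free cumulant convolution formula (Theorem 5.2.1 in \cite{CNS1}) together with $(L(vx),R(vy))\sim(L(x),R(y))$ (the already-established Haar case) to conclude that $(L(uvx),R(uvy))$ and $(L(ux),R(uy))$ are identically $*$-distributed. A short Kreweras-complement computation shows $uv$ is again a Haar unitary free from $\{x,y\}$, so the Haar case applies to $(uvx,uvy)$; hence $(ux,uy)$ satisfies $(3.6)$, and the converse half of Theorem 3.2 finishes the proof. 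If you want to keep your Proposition 3.1 route, you should at minimum import this auxiliary-Haar-unitary argument to establish your ingredient (a); as written, the direct expansion does not close.
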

\begin{proof} The equivalence of  $(1)$ and $(2)$ is proved in Theorem 3.2. The equivalence of $(1)$ and $(3)$ was proved in Corollary 2.18 and Theorem 4.4 in \cite{GK}. If $(4)$ holds true, then so does $(3)$. Therefore, $(L(x), R(y))$ is $R$-diagonal. Conversely, suppose $(L(x), R(y))$ is $R$-diagonal. Let $\{v\}$ be a Haar unitary such that $\{v\}$, $\{u\}$ and $\{x,y\}$ are $*$-free in $(\A,\varphi)$. Then $u$ and $\{vx, vy\}$ are $*$-free. We can realize $\A$ as $(\A,\varphi)=(\A_1,\varphi_1)*(\A_2,\varphi_2)$, the reduced free product of the two $W^*$-probability spaces $(\A_1, \varphi_1)$ and $(\A_2, \varphi_2)$ such that $u\in \A_1$ and $v, x,y\in \A_2$. By 6.2 in \cite{DV}, $(L(u), R(u))$ and $(L(vx),R(vy))$ are $*$-bi-free. Let $$U_l=L(u), U_r=R(u), U=(U_l, U_r), V_l=L(vx), V_r=R(vy), V=(V_l, V_r), $$ $$W_l=L(x), W_r=R(y), W=(W_l, W_r), Z=(L(uvx), R(uvy))=(U_lV_l, V_rU_r),$$ and $$ Z'=(L(ux), R(uy))=(U_lW_l, W_rU_r).$$  By Theorem 5.2.1 in \cite{CNS1}, we have
$$\kappa_\chi(Z)=\sum_{\pi\in BNC(\chi)}\kappa_\pi(U)\kappa_{K_{BNC}(\pi)}(V)=\sum_{\pi\in BNC(\chi)}\kappa_\pi(U)\kappa_{K_{BNC}(\pi)}(W)=\kappa_\chi(Z'),$$ for every $\chi:\{1,..., n\}\rightarrow \{l,r\}$, where the second equation holds because,  by $(3)$, $(L(vx), R(vy))$ and $(L(x), R(y))$ are identically $*$-distributed.
Therefore, $(L(uvx), R(uvy))$ and $(L(ux), R(uy))$ are identically $*$-distributed.

By Theorem 14.4 in \cite{NS}, we have $$\varphi((uv)^n)=\varphi(uvuv\cdots uv)=\sum_{\pi\in NC(n)}\kappa_\pi(u)\varphi_{K_\pi}(v, ..., v)=0, n=1, 2, ...,$$ since $v$ is a Haar unitary, where $K\pi$ is the Kreweras complement of $\pi\in NC(n)$ (see Definition 9.21 in \cite{NS}).
Similarly, $\varphi((v^*u^*)^n)=\varphi(v^*u^*v^*u^*\cdots v^*u^*)=0, n\ge 1$. It follows that $uv$ is a Haar unitary.
By Theorem 3.2 in \cite{GK} and Theorem 3.2, $(L(uvx),R(uvy))$ (therefore, $(L(ux), R(uy))$) satisfies $(3.6)$.    Moreover,  $(3.7)$ holds, since $u$ is $*$-free from $\{x,y\}$. By the second part of the proof of Theorem 3.2, $(L(x), R(y))$ and $(L(ux), R(uy))$ are identically $*$-distributed.
\end{proof}

\section{ $\eta$-diagonal Pairs of Random Variables}

In this section, we study $\eta$-diagonal pairs of random variables, characterizing  $\eta$-diagonal pairs in terms of the $*$-moments of  the random variables.

\begin{Definition}[\cite{GS}] Let $n\in \mathbb{N}$ and $\chi:\{1, 2, \cdots, n\}\rightarrow \{l,r\}$. 
A partition $\pi\in BNC(\chi)$ is said to be bi-interval if every block of $\pi$ is a $\chi$-interval, that is, every block of the partition is an interval of natural numbers with respect to the new order $\prec_\chi$ defined at the beginning of Section 2. The set of all bi-interval partitions is denoted by $BI(\chi)$. Let $(\A,\varphi)$ be a non-commutative probability space. The $B$-$(l,r)$- cumulants are the multilinear functionals $B\chi:\A^n\rightarrow \mathbb{C}$ for $\chi:\{1, 2, \cdots, n\}\rightarrow \{l,r\}$ defined by the requirement $$\varphi(a_1,a_2, \cdots, a_n)=\sum_{\pi\in BI(\chi)}B_\pi(a_1, \cdots, n),\eqno (4.1)$$ where $$B_\pi(a_1,...,a_n)=\prod_{V\in\pi}B_{\chi|_V}((a_1,...,a_n)|_V).$$
A family $\{(\A_{k,l}, \A_{k,r}): k\in K\}$  of pairs of non-unital subalgebras in $(\A, \varphi)$ is said to be bi-Boolean independent if for all $n\in \mathbb{N}$, $\chi:\{1, 2, \cdots, n\}\rightarrow \{l,r\}$, $\omega:\{1,..., n\}\rightarrow K$, and  $a_j\in \A_{\omega(j), \chi(j)}$, $j=1, ..., n$, we have $$\varphi(a_1\cdots a_n)=\varphi_{\pi_{\omega, \chi}}(a_1,...,a_n),$$ where $\pi_{\omega,\chi}=\max\{\pi\in BI(\chi):\pi\le \omega^{-1}\}$, where $\omega^{-1}$ is the partition of $\{1,..., n\}$ defined by $i\sim_{\omega^{-1}}j$ if and only if $\omega(i)=\omega(j)$.
\end{Definition}

We give a straightforward proof of the following characterization of bi-free Boolean independence in terms of B-$(l,r)$-cumulants, without using either c-bi-free independence or the incidence algebra of $BI$.
\begin{Proposition}[Theorem 3.7 of \cite{GS}]A family $\{(\A_{k,l}, \A_{k,r})\}_{k\in K}$  of pairs of non-unital algebras in $(\A, \varphi)$ is  bi-Boolean independent if and only if  for every $n\ge 2$, $\chi:\{1, 2, \cdots, n\}\rightarrow \{l,r\}$, $\omega:\{1,..., n\}\rightarrow K$, and $a_j\in \A_{\omega(j), \chi(j)}$, $j=1, ..., n$, we have $$B_\chi(a_1, ..., a_n)=0,$$ whenever $\omega$ is not constant.
\end{Proposition}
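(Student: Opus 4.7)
The plan is to run a single induction on $n$, using the defining moment-cumulant relation $(4.1)$ both as the identity to be inverted and, on restriction to blocks, as the inductive tool; the statement is then essentially a Möbius-inversion statement on the lattice $BI(\chi)$.

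For the forward direction, suppose the family is bi-Boolean independent and $\omega$ is non-constant. When $n=2$, the only bi-interval partition below $\omega^{-1}$ is the singleton partition $\{\{1\},\{2\}\}$, so bi-Boolean independence gives $\varphi(a_1a_2)=\varphi(a_1)\varphi(a_2)$; comparing this to $(4.1)$, which reads $\varphi(a_1a_2)=B_\chi(a_1,a_2)+\varphi(a_1)\varphi(a_2)$, yields $B_\chi(a_1,a_2)=0$. For the inductive step, assume the vanishing holds for all lengths strictly smaller than $n$. Write $(4.1)$ as
\begin{equation*}
\varphi(a_1\cdots a_n)=B_\chi(a_1,\ldots,a_n)+\sum_{\pi\in BI(\chi),\,\pi\neq 1_\chi}B_\pi(a_1,\ldots,a_n).
\end{equation*}
Every block $V$ of such a $\pi\neq 1_\chi$ has $|V|<n$, so the inductive hypothesis kills $B_{\chi|_V}((a_1,\ldots,a_n)|_V)$ unless $\omega|_V$ is constant; only partitions $\pi\le\omega^{-1}$ survive, and $\pi=1_\chi$ is absent from this set because $\omega$ is non-constant. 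On the other hand, bi-Boolean independence and a second application of $(4.1)$ on each block of $\pi_{\omega,\chi}$ (each of which has length less than $n$ and $\omega$ constant) give, by multiplicativity of $B_\pi$ over blocks,
\begin{equation*}
\varphi(a_1\cdots a_n)=\varphi_{\pi_{\omega,\chi}}(a_1,\ldots,a_n)=\sum_{\pi\in BI(\chi),\,\pi\le\pi_{\omega,\chi}}B_\pi(a_1,\ldots,a_n).
\end{equation*}
By the very definition $\pi_{\omega,\chi}=\max\{\pi\in BI(\chi):\pi\le\omega^{-1}\}$, the index sets $\{\pi\in BI(\chi):\pi\le\omega^{-1}\}$ and $\{\pi\in BI(\chi):\pi\le\pi_{\omega,\chi}\}$ coincide, so subtracting the two expansions forces $B_\chi(a_1,\ldots,a_n)=0$.

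The converse is the same calculation read the other way and requires no induction: assuming that mixed bi-Boolean cumulants vanish, $(4.1)$ collapses to a sum over $\pi\in BI(\chi)$ with $\pi\le\omega^{-1}$, which by the same combinatorial identity and block-wise application of $(4.1)$ recovers $\varphi_{\pi_{\omega,\chi}}(a_1,\ldots,a_n)$, proving bi-Boolean independence. The only non-routine point is the combinatorial identification of the two index sets and the fact that a bi-interval partition refining $\pi_{\omega,\chi}$ is precisely the data of a bi-interval partition of each of its $\chi$-interval blocks; both follow at once from the observation that an interval of an interval in the order $\prec_\chi$ is again an interval. Beyond this bookkeeping, no serious obstacle arises, since one is essentially running the standard Möbius inversion argument of \cite{SW} in the bi-interval lattice instead of the ordinary interval lattice.
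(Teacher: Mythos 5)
Your proof is correct and follows essentially the same route as the paper's: induction on $n$ with the $n=2$ base case, inverting the moment--cumulant relation $(4.1)$, using multiplicativity over blocks and the identity $\varphi_{\pi_{\omega,\chi}}=\sum_{\pi\le\pi_{\omega,\chi}}B_\pi$, and reading the computation backwards for the converse. The only difference is bookkeeping: you discard the terms with $\pi\nleq\omega^{-1}$ before comparing the two expansions, whereas the paper subtracts first and then kills the leftover sum $\sum_{\pi\ne 1_\chi,\,\pi\nleq\pi_{\omega,\chi}}B_\pi$ by the same inductive hypothesis.
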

\begin{proof} If $\{(\A_{k,l}, \A_{k,r})\}_{k\in K}$ is bi-free Boolean independent, then for every $n\ge 2$, $\chi:\{1, 2, \cdots, n\}\rightarrow \{l,r\}$, $\omega:\{1,..., n\}\rightarrow K$, and  $a_j\in \A_{\omega(j), \chi(j)}$, $j=1,..., n$, we have $$\varphi(a_1\cdots a_n)=\varphi_{\pi_{\omega, \chi}}(a_1,..., a_n)=\sum_{\pi\in BI(\chi), \pi\le \pi_{\omega, \chi}}B_{\pi}(a_1,..., a_n).$$ If $n=2$ and $\omega(1)\ne \omega(2)$, then $\varphi(a_1a_2)=\varphi_{\pi_{\omega,\chi}}(a_1,a_2)=\varphi(a_1)\varphi(a_2)=\sum_{\pi\in BI(\chi)}B_\pi(a_1,a_2)=\varphi(a_1)\varphi(a_2)+B_\chi(a_1, a_2)$. It implies that $B_\chi(a_1, a_2)=0$. Suppose that $B_\chi(a_1, ..., a_n)=0$, for $n\ge 2$, and $\omega$ is not constant. Now consider $\chi:\{1, ..., n+1\}\rightarrow \{l,r\}$ and $\omega:\{1, ..., n+1\}\rightarrow K$ is not constant. By the above definition,
\begin{align*}
B_\chi(a_1,..., a_{n+1})=&\varphi(a_1\cdots a_{n+1})-\sum_{\pi\in BI(\chi), \pi\ne 1_\chi}B_\pi(a_1,...,a_{n+1})\\
=&\sum_{\pi\in BI(\chi), \pi\le \pi_{\omega, \chi}}B_{\pi}(a_1,..., a_{n+1})\\
-&(\sum_{\pi\in BI(\chi), \pi\le \pi_{\omega, \chi}}B_{\pi}(a_1,..., a_{n+1})+\sum_{\pi\in BI(\chi), \pi\ne 1_\chi, \pi\nleq \pi_{\omega, \chi}}B_{\pi}(a_1, ..., a_{n+1}))\\
=& -\sum_{\pi\in BI(\chi), \pi\ne 1_\chi, \pi\nleq \pi_{\omega, \chi}}\prod_{V\in\pi}B_{\chi|V}((a_1, ..., a_{n+1})|V)=0,
\end{align*}
the last equality holds true because of the inductive hypothesis and the fact that, for each $\pi$ in the set of partitions of the second summand,  there is a block $V\in \pi$ such that $\omega|V$ is not constant.

The above discussions also show that vanishing of mixed B-$(l,r)$-cumulants implies bi-free Boolean independence for a family of pairs of non-unital algebras.
\end{proof}
We adjust  some notations in \cite{BNNS} to the present case. Let $\mathcal{W}^+:=\bigsqcup_{n=1}^\infty\{l,l^*, r, r^*\}^n$ be the set of all non-empty words over the four-letter alphabet $\{l,l^*, r, r^*\}$. The algebra of polynomials in four non-commutative variables $Z_l$, $Z_l^*$, $Z_r$, and $Z_r^*$
is denoted by $\mathbb{C}\langle Z_l, Z_l^*, Z_r, Z_r^* \rangle$. For $w=w_1 \cdots w_n\in \mathcal{W}^+,$ we write $Z_w=Z_{w_1}\cdots Z_{w_n}\in \mathbb{C}\langle Z_l, Z_l^*, Z_r, Z_r^*\rangle$, where $$Z_k=\left\{\begin{matrix}Z_l, & \text{if } k=l\\
Z_l^*, &\text{if } k=l^*\\
Z_r,& \text{ if } k=r\\
Z_r^*, & \text{ if } k=r^*.\end{matrix}\right.$$ We also use $Z_w$ to denote the tuple $(Z_{w_1},..., Z_{w_n})$.
An algebraic $*$-distribution is a unital linear functional $$\mu:\mathbb{C}\langle Z_l, Z_l^*, Z_r, Z_r^*\rangle\rightarrow \mathbb{C}.$$ The collection of all algebraic $*$-distributions of $\mathbb{C}\langle Z_l, Z_l^*, Z_r, Z_r^*\rangle$ is denoted by $\mathcal{D}(l,r,*)$. The algebra of all formal power series in four non-commutative indeterminates $z_l, z_r, z_l^*$ and $z_r^*$ is denoted by $\mathbb{C}\langle\langle z_l,z_l^*,z_r, z_r^*\rangle\rangle$. The collection of all power series in $\mathbb{C}\langle\langle z_l, z_l^*, z_r, z_r^*\rangle\rangle$ with vanishing constant coefficient is denoted by $\mathbb{C}_0\langle\langle z_l, z_l^*, z_r, z_r^*\rangle\rangle$. In the following, $\mu(Z_w)=\mu(Z_{w_1}\cdots Z_{w_n})$, while $B_\chi(Z_w)=B_{\chi}(Z_{w_1}, ..., Z_{w_n})$, and $\kappa_\chi(Z_w)=\kappa_{\chi}(Z_{w_1},..., Z_{w_n})$.

Applying Definition 7.1 in \cite{GS}, and Definition and Remark 2.3 in \cite{BNNS} to our case, we get the following definition.
\begin{Definition}
For $\mu\in \mathcal{D}(l,r,*)$, we define the following formal power series.
\begin{enumerate}
\item The moment series of $\mu$ is $M_\mu:=\sum_{w\in \mathcal{W}^+}\mu(Z_w)z_w\in \mathbb{C}_0\langle\langle z_l, z_l^*, z_r, z_r^*\rangle\rangle$.
\item The bi-Boolean $\eta$-series of $\mu$ is $$\eta_\mu=\sum_{w\in \mathcal{W}^+}B_{\chi_w}(Z_w)z_w,$$
\item The bi-free $R$-transform of $\mu$ is $R_\mu=\sum_{w\in \mathcal{W}^+}\kappa_{\chi_w}(Z_w)z_w$,
\end{enumerate}
where $\chi_w: \{1, 2, ..., n\}\rightarrow \{l,r\}$ is defined by $\chi_w(k)=l$, if $w_k\in\{l, l^*\}$; $\chi_w(k)=r$, if $w_k\in \{r, r^*\}$, for $w=w_1...w_n\in \mathcal{W}^+$,
$B_{\chi_w}$ is defined by (4.1).
\end{Definition}

\begin{Definition}
We say $w=w_1...w_n\in \W^+$ is  alternating, if $w_{\chi_w}:=(w_{s_{\chi_w}(1)}... w_{s_{\chi_w}(n)})\in W_1\cup W_2$, where $\chi_w:\{1,..., n\}\rightarrow \{l, r\}$ is the function defined in definition 4.3, and
\begin{enumerate}
\item   $ W_{1}:=\{l^{\omega(1)},  ..., l^{\omega(k)}, r^{\omega(k+1)},..., r^{\omega(2m)})\in \mathcal{W}^+:\omega:\{1,..., 2m\}\rightarrow \{1,*\},\omega(1)=1, \omega(i)\ne\omega(i+1), i=1,..., 2m-1, 0\le k\le 2m, m=1, 2, ... \}$,
\item  $ W_{2}:=\{l^{\omega(1)}, l^{\omega(2)},  ..., l^{\omega(k)}, r^{\omega(k+1)},..., r^{\omega(2m)})\in \mathcal{W}^+:\omega:\{1,..., 2m\}\rightarrow \{1,*\},\omega(1)=*,  \omega(i)\ne\omega(i+1), i=1,..., 2m-1, 0\le k\le 2m, m=1, 2, ... \}$.
\end{enumerate}
A word $w\in \mathcal{W}^+$ is said to be mixed alternating, if  $w_{\chi_w}=w_1\cdots w_p$, where $w_i\in W_{j_i}$, $j_1\ne j_2\ne \cdots\ne j_p$, and $j_1, j_2, ..., j_p\in \{1,2\}$.
\end{Definition}
\begin{Definition} A $*$-distribution $\mu\in \mathcal{D}(l,r,*)$ is said to be $\eta$-diagonal if $B_{\chi_\omega}(Z_w)=0$ whenever $w\in \mathcal{W}^+$ is not alternating. In this case, the $\eta$-series of $\mu$ has the following form $$\eta_\mu=\sum_{w_{\chi_w}\in W_1}B_{\chi_\omega}(Z_w)z_w+\sum_{w_{\chi_w}\in W_2}B_{\chi_\omega}(Z_w)z_w.$$

\end{Definition}

We shall give a characterization of $\eta$-diagonal distributions in $\mathcal{D}(l,r,*)$ in terms of their $*$-moments, similar to Theorem 2.8 in \cite{BNNS} for $\eta$-diagonal distributions of single random variables. We first give a couple of preliminary results.

\begin{Lemma} For $\mu\in \mathcal{D}(l,r,*)$, if $\mu(Z_w)=0$, whenever $w$ is not mixed alternating, then $B_{\chi_w}(Z_w)=0$, whenever $w$ is not mixed-alternating.
\end{Lemma}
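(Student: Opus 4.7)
The plan is to proceed by strong induction on the length $n=|w|$, using Möbius-type inversion of (4.1). For the base case $n=1$, a single letter cannot be mixed alternating: any decomposition of $w_{\chi_w}$ into pieces of $W_1\cup W_2$ must have total length at least two, so the hypothesis forces $\mu(Z_w)=0$; since $BI(\chi_w)=\{1_{\chi_w}\}$ when $n=1$, equation (4.1) then gives $B_{\chi_w}(Z_w)=\mu(Z_w)=0$.

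For the inductive step, I would fix a non-mixed-alternating $w$ of length $n$ and rearrange (4.1) as
\begin{equation*}
B_{\chi_w}(Z_w) \;=\; \mu(Z_w) \;-\; \sum_{\substack{\pi\in BI(\chi_w)\\ \pi\ne 1_{\chi_w}}} \prod_{V\in\pi} B_{\chi_w|_V}\bigl((Z_w)|_V\bigr).
\end{equation*}
The first term vanishes by hypothesis. To kill the remaining sum, I would reduce to showing that every non-trivial bi-interval partition $\pi$ has some block $V$ whose subword $(Z_w)|_V$ is not mixed alternating; the inductive hypothesis applied to that strictly shorter subword then annihilates the corresponding factor in the product.

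The core combinatorial point --- the contrapositive --- is: if every block-subword of some non-trivial $\pi\in BI(\chi_w)$ is mixed alternating, then $w$ is already mixed alternating. Because $\pi$ is bi-interval its blocks cut $w_{\chi_w}$ into contiguous $\prec_{\chi_w}$-segments, each inheriting the ``all $l$-type letters followed by all $r$-type letters'' shape of $w_{\chi_w}$. Concatenating the given mixed alternating decompositions of the segments yields $w_{\chi_w}=v_1\cdots v_p$ with each $v_i\in W_{j_i}$, possibly with $j_i=j_{i+1}$ at some boundaries. However, whenever $j_i=j_{i+1}$ occurs, a case analysis exploiting that $w_{\chi_w}$ has a unique $l$-to-$r$ transition (so at most one $v_i$ can straddle it) shows $v_i$ and $v_{i+1}$ both lie on the same pure side or in a bridge-adjacent configuration, and the defining alternating $*$-pattern of $W_{j_i}$ forces the last letter of $v_i$ to have the $*$-status opposite to that of the first letter of $v_{i+1}$. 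Their concatenation therefore still satisfies the $W_{j_i}$-axioms, so $v_i$ and $v_{i+1}$ merge into a single piece of $W_{j_i}$; iterating finitely many times produces an alternating $(j_i)$-sequence, contradicting the non-mixed-alternation of $w$.

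The hard part will be the boundary bookkeeping in the merging step: systematically verifying the four relevant configurations (pure-$l$/pure-$l$, pure-$r$/pure-$r$, pure-$l$/bridge, and bridge/pure-$r$) and checking that in each case the common $W$-membership together with the forced $*$-alternation inside $v_i$ and $v_{i+1}$ makes their concatenation re-enter the same $W_{j_i}$, so that the merging can be carried out without spoiling the ``all $l$'s before all $r$'s'' shape inherited from $w_{\chi_w}$.
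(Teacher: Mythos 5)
Your proposal is correct and is essentially the paper's argument: both reduce the lemma to the combinatorial fact that a word whose restriction to every block of a bi-interval partition is mixed alternating must itself be mixed alternating, the only difference being that the paper applies M\"obius inversion on $BI(\chi_w)$ explicitly (writing $B_{\chi_w}(Z_w)=\sum_{\pi\in BI(\chi_w)}\mu_\pi(Z_w)\,\mu_{BI}(\pi,1_{|w|})$ and killing each term through a vanishing moment on some block), whereas you unwind that inversion as a strong induction on $|w|$. The paper asserts the closure fact without proof, so your merging analysis is a welcome supplement; just make sure the case list also covers the pure-$l$ piece followed immediately by a pure-$r$ piece with no straddling piece between them, which merges for the same parity/star-alternation reason.
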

\begin{proof} Let $w=w_1...w_n\in \mathcal{W}^+$ and $\chi_w:\{1, ..., n\}\rightarrow \{l,r\}$ defined in Definition 4.3. By Definition 4.1, $\mu(Z_w)=\sum_{\pi\in BI(\chi_w)}B_{\chi_w}(Z_w)$. By 3.3 in \cite{GS},  the Mobius function $$\mu_{BI}: BI:=\cup_{n\ge 1}\cup_{\chi:\{1, ..., n\}\rightarrow \{l,r\}}BI(\chi)\rightarrow \mathbb{Z}$$ is defined recursively by the equation $$\sum_{\tau\in BI(\chi), \sigma\le \tau\le \pi}\mu_{BI}(\tau, \pi)=\sum_{\tau\in BI(\chi), \sigma\le \tau\le \pi}\mu_{BI}(\sigma,\tau)=\left\{\begin{matrix}1, & \text{ if } \sigma=\pi \\
0, & \text{ otherwise}\end{matrix}.\right.$$ Then we have $$B_{\chi_w}(Z_w)=\sum_{\pi\in BI(\chi_w)}\mu_{\pi}(Z_w)\mu_{BI}(\pi, 1_{|w|}),$$ for $w\in \mathcal{W}^+$. The conclusion follows now from the fact that $w$ is mixed alternating if $w|_V$ is mixed alternating for every $V\in \pi\in BI(\chi)$.
\end{proof}
\begin{Lemma} Let $\mu, \nu\in \mathcal{D}(l,r,*)$. If  $\mu$ and $\nu$ satisfy the following conditions
\begin{enumerate}
\item  $\mu(Z_w)=\nu(Z_w)=0$, if $w\in \mathcal{W}^+$ is not mixed alternating;
\item for a mixed alternating word $w\in \mathcal{W}^+$ with canonical factorization $w_{\chi_w}=w_1\cdots w_d$, and $\pi_w=\{J_1, ..., J_d\}\in BI(\chi_w)$ such that $w_{\chi_w|_{J_i}}=w_i$, $i=1, 2, ..., d$, we have  $$\mu(Z_w)=\mu(Z_{w|_{J_1}})\cdots \mu(Z_{w|_{J_d}}), \nu(Z_w)=\nu(Z_{w|_{J_1}})\cdots \nu(Z_{w|_{J_d}});$$
\item also, $B^{(\mu)}_{\chi_w}(Z_w)=B^{(\nu)}_{\chi_w}(Z_w)$ for every alternating word $w\in \mathcal{W}^+$, where $B^{(\mu)}$ and $B^{(\nu)}$ are Boolean cumulant functions of $\mu$ and $\nu$, respectively,
\end{enumerate}
then $\mu=\nu$.
\end{Lemma}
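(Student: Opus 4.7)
The plan is to establish $\mu(Z_w)=\nu(Z_w)$ for every $w\in\mathcal{W}^+$, splitting into three cases according to the structure of $w$ and reducing each case to the next.

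For $w$ that is not mixed alternating, condition (1) gives $\mu(Z_w)=0=\nu(Z_w)$ directly. For $w$ that is mixed alternating with canonical factorization $w_{\chi_w}=w_1\cdots w_d$, condition (2) reduces the equality to establishing $\mu(Z_{w|_{J_i}})=\nu(Z_{w|_{J_i}})$ for each factor, each of which is alternating by definition. Thus the whole problem collapses to verifying the identity of $\mu$ and $\nu$ on alternating words.

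For alternating $w$, I would apply the moment-cumulant formula (4.1):
\[
\mu(Z_w)=\sum_{\pi\in BI(\chi_w)}B^{(\mu)}_\pi(Z_w),\qquad \nu(Z_w)=\sum_{\pi\in BI(\chi_w)}B^{(\nu)}_\pi(Z_w),
\]
and show the two sums agree term by term. The key combinatorial observation is that for any $\pi\in BI(\chi_w)$ and any block $V\in\pi$, the restricted word $w|_V$ is itself alternating: viewing $w_{\chi_w}$ in the order $\prec_{\chi_w}$ as a string of $l$-letters (alternating in the $*$-exponent) followed by a string of $r$-letters (alternating in the $*$-exponent), any $\chi_w$-interval sub-word inherits both the alternation of $*$-exponents and the ordering of $l$-letters before $r$-letters, so it lies in $W_1\cup W_2$. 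Given this, condition (3) yields $B^{(\mu)}_{\chi_w|_V}(Z_{w|_V})=B^{(\nu)}_{\chi_w|_V}(Z_{w|_V})$ for every block, and multiplicativity of $B_\pi$ over blocks then gives $B^{(\mu)}_\pi(Z_w)=B^{(\nu)}_\pi(Z_w)$ for every $\pi\in BI(\chi_w)$, as needed.

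The main obstacle is the combinatorial verification that restrictions to $\chi_w$-intervals preserve the alternating property. This requires carefully unpacking the definition of $\prec_{\chi_w}$ and of the sets $W_1,W_2$ in Definition 4.4, to make sure that cutting $w_{\chi_w}$ into a contiguous piece (in the $\prec_{\chi_w}$-order) always produces another element of $W_1\cup W_2$. Once this is in place, the three-case reduction outlined above finishes the proof with no further combinatorial input; no induction on word length is actually needed since condition (3) is assumed for all alternating words at once.
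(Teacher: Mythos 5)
Your reduction to alternating words via conditions (1) and (2) matches the paper, and the overall strategy of comparing the two moment--cumulant expansions term by term is the right one. However, your key combinatorial claim --- that for \emph{every} $\pi\in BI(\chi_w)$ and every block $V\in\pi$ the restricted word $w|_V$ is alternating --- is false, and condition (3) therefore does not cover all the terms. The sets $W_1$ and $W_2$ in Definition 4.4 consist only of words of \emph{even} length $2m$; a $\chi_w$-interval block $V$ of odd size (e.g.\ a singleton) yields a restriction $w|_V$ that does inherit the alternation of $*$-exponents and the $l$-before-$r$ ordering, but has odd length and hence lies in neither $W_1$ nor $W_2$. For such blocks condition (3) gives you nothing, so the term-by-term comparison breaks down exactly at the partitions containing an odd block.

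The missing ingredient is the paper's Lemma 4.6: condition (1) together with M\"obius inversion on $BI$ forces $B^{(\mu)}_{\chi_v}(Z_v)=B^{(\nu)}_{\chi_v}(Z_v)=0$ whenever $v$ is not mixed alternating --- in particular whenever $v$ has odd length. With this in hand, every partition $\pi\in BI(\chi_w)$ containing an odd block contributes $0$ to both expansions, the sum collapses to partitions all of whose blocks are even, and for those blocks your argument that $w|_V\in W_1\cup W_2$ is correct, so condition (3) finishes the proof. You should therefore add this vanishing step (invoking condition (1), which your treatment of the alternating case never uses) rather than asserting that all restrictions are alternating.
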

\begin{proof}By conditions (1) and (2), it is sufficient to prove $\mu(Z_w)=\nu(Z_w)$ for an alternating word $w\in \mathcal{W}^+$. We prove the equality $\mu(Z_w)=\nu(Z_w)$ for $w_{\chi_w}\in W_1$. The proof for  the other case is essentially the same as  this case. By $(4.1)$, we have
\begin{align*}
\mu(Z_w)=&\sum_{\pi\in BI(\chi_w)}B^{(\mu)}_{\pi}(Z_w)\\
=&\sum_{\pi\in BI(\chi_w), |V|\in 2\mathbb{N}, \forall V\in \pi}\prod_{V\in \pi}B^{(\mu)}_{ \chi_w|_V}(Z_{w|_V})\\
=&\sum_{\pi\in BI(\chi_w), |V|\in 2\mathbb{N}, \forall V\in \pi}\prod_{V\in \pi}B^{(\nu)}_{ \chi_w|_V}(Z_{w|_V})=\nu(Z_w),
\end{align*} where the second and last equalities hold because $B_{\chi_w|_V}(Z_{w|_V})=0$  by Lemma 4.6, for $V\in \pi\in BI(\chi_w)$ such that $|V|$ is odd, since $w|_V$ is not mixed alternating; the third equality holds, because Condition $(3)$ and the fact that $w|_V$ is an alternating word in $W_1$, for a blck $V\in \pi$ if every block of $\pi$ is of an even set.
\end{proof}
\begin{Theorem} A $*$-distribution $\mu\in \mathcal{D}(l,r,*)$ is $\eta$-diagonal if and only if $\mu(Z_w)=0$, whenever $w\in \mathcal{W}^+$ is not mixed alternating, and $\mu(Z_w)=\mu(Z_{w|_{J_1}})\cdots \mu(Z_{w|_{J_d}})$ for every mixed alternating word $w$ with canonical factorization $w_{\chi_w}=w_1\cdots w_d$, where $\pi_w:=\{J_1, ..., J_d\}\in BI(\chi_{w})$ such that $w_{\chi_w}|_{J_i}=w_i$, $i=1, ..., p$.
\end{Theorem}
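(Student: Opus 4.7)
The plan is to reduce both implications to the moment-cumulant formula $\mu(Z_w) = \sum_{\pi \in BI(\chi_w)} B_\pi(Z_w)$ combined with the following combinatorial key step: \emph{if a partition $\pi \in BI(\chi_w)$ has $w|_V$ alternating for every block $V \in \pi$, then $w$ is mixed alternating and $\pi \le \pi_w$.} The idea is that the boundaries between consecutive factors $w_i$ and $w_{i+1}$ of the canonical factorization are precisely the positions in $w_{\chi_w}$ where the alternating $\omega$-pattern breaks (a $W_1$-piece ending with exponent $*$ followed by a $W_2$-piece also starting with $*$, or the symmetric case); any block $V$ straddling such a boundary would therefore contain two consecutive same-$\omega$ letters and force $w|_V$ to be non-alternating, contradicting the hypothesis.

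For the forward direction, assume $\mu$ is $\eta$-diagonal, so $B_{\chi_w|_V}(Z_{w|_V})$ vanishes whenever $w|_V$ is not alternating. In the moment-cumulant sum, only partitions with every block yielding an alternating subword contribute. For non-mixed-alternating $w$, no such partition exists by the key step, so $\mu(Z_w) = 0$. For mixed alternating $w$, the surviving partitions satisfy $\pi \le \pi_w$; adding back the (zero) contributions from $\pi \le \pi_w$ that have some non-alternating block yields
\[
\mu(Z_w) = \sum_{\pi \le \pi_w} B_\pi(Z_w) = \prod_{i=1}^{d} \sum_{\sigma_i \in BI(\chi_w|_{J_i})} B_{\sigma_i}(Z_{w|_{J_i}}) = \prod_{i=1}^{d} \mu(Z_{w|_{J_i}}).
\]
Conversely, assume the two moment conditions and prove $B_{\chi_w}(Z_w) = 0$ for every non-alternating $w$ by strong induction on $|w|$. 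The base case $|w|=1$ is immediate: a single letter is not mixed alternating, so $\mu(Z_w) = 0$ and $B_{\chi_w}(Z_w) = \mu(Z_w) = 0$. For the inductive step, if $w$ is not mixed alternating, Lemma 4.6 applies directly. Otherwise $w$ is mixed alternating with $d \ge 2$ canonical factors. Splitting off $\pi = 1_{\chi_w}$ in
\[
\mu(Z_w) = B_{\chi_w}(Z_w) + \sum_{\pi \in BI(\chi_w),\, \pi \ne 1_{\chi_w}} B_\pi(Z_w),
\]
every block of each remaining $\pi$ has length $< |w|$, so by the inductive hypothesis $B_\pi(Z_w) = 0$ unless every $w|_V$ is alternating. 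The key step then restricts the surviving partitions to $\pi \le \pi_w$, and $d \ge 2$ forces $1_{\chi_w} \nleq \pi_w$, so (again adding back zero contributions) the sum equals $\prod_i \mu(Z_{w|_{J_i}}) = \mu(Z_w)$ by hypothesis, forcing $B_{\chi_w}(Z_w) = 0$.

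The main obstacle I anticipate is the case analysis underlying the key combinatorial step: one must verify that every boundary between $w_i$ and $w_{i+1}$ in the canonical factorization genuinely forces a failure of alternation for any block crossing it, handling both the $W_1$-to-$W_2$ and $W_2$-to-$W_1$ transitions, and both positions (inside the $l$-range or the $r$-range of $w_{\chi_w}$). Once this structural observation is in place, the remainder of the proof is a matter of bookkeeping with the moment-cumulant formula and the factorization hypothesis, paralleling the strategy used for single-variable $\eta$-diagonal elements in Theorem 2.8 of \cite{BNNS}.
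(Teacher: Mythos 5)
Your proposal is correct, and the forward direction coincides with the paper's argument (only partitions all of whose blocks restrict to alternating words survive in the moment--cumulant sum; these are exactly the $\pi\le\pi_w$, and the sum over $\pi\le\pi_w$ factorizes over the blocks $J_i$). The converse, however, takes a genuinely different route. The paper constructs an auxiliary distribution $\nu$ by prescribing its $B$-cumulants to agree with those of $\mu$ on alternating words and to vanish elsewhere, observes that $\nu$ is $\eta$-diagonal by fiat, pushes $\nu$ through the already-proved forward direction, and then invokes the separate uniqueness statement (Lemma 4.7) to conclude $\mu=\nu$. You instead prove directly, by strong induction on $|w|$, that $B_{\chi_w}(Z_w)=0$ for every non-alternating $w$: Lemma 4.6 handles the non-mixed-alternating case, and for a mixed alternating $w$ with $d\ge 2$ canonical factors you isolate the full-block term, kill or identify the remaining terms via the inductive hypothesis and the key combinatorial step, and match the surviving sum $\sum_{\pi\le\pi_w}B_\pi(Z_w)$ against the factorization hypothesis $\mu(Z_w)=\prod_i\mu(Z_{w|_{J_i}})$. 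Your route is more self-contained (it bypasses Lemma 4.7 and the construction of $\nu$), at the cost of carrying the induction explicitly; the paper's route reuses the forward implication and packages the bookkeeping into the uniqueness lemma. Both arguments rest on the same combinatorial fact, which you correctly isolate and whose verification (boundaries of the canonical factorization occur exactly where consecutive letters of $w_{\chi_w}$ carry the same exponent, so any $\chi$-interval block crossing a boundary restricts to a non-alternating word, and conversely a partition with all blocks alternating merges into the canonical factorization) is routine; spelling out that a concatenation of alternating pieces can always be regrouped into a factorization with strictly alternating $W_1/W_2$ types is the one detail you should add when writing this up.
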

\begin{proof}
If a distribution $\mu$ is $\eta$-diagonal, then, for a non-mixed-alternating word $w\in \mathcal{W}^+$,  $$\mu(Z_w)=\sum_{\pi\in BI(\chi_w)}\prod_{V\in \pi}B_{\chi_w|_V}(Z_{w|_V})=0,$$ since there is at least one block $V\in \pi$ such that $w|_V$ is not alternating for every $\pi\in BI(\chi_w)$. Moreover, for a mixed alternating word $w\in \mathcal{W}^+$ with canonical factorization $w_{\chi_w}=w_1\cdots w_d$, let $\pi_w=\{J_1, ..., J_d\}\in BI(\chi_w)$ such that $w_{\chi_w}|_{J_i}=w_i$, for $i=1, 2, ..., d$. Then $B_{\rho, \chi_w}(Z_w)=0$ for $\rho\in BI(\chi_w)$, if $\rho\nleq \pi_w$, since $\mu$ is $\eta$-diagonal. It follows that
\begin{align*}\mu(Z_w)=&\sum_{\pi\in BI(\chi_w)}B_{\pi}(Z_w)=\sum_{\pi\in BI(\chi_w),\pi\le \pi_w}B_{\pi}(Z_w)\\
=&\sum_{\pi=\pi_1\cup\cdots \cup\pi_d\in BI(\chi_w), \pi_i\in BI(\chi_w|_{J_i}), i=1, ..., d}\prod_{i=1}^dB_{\pi_i}(Z_{w|_{J_i}})\\
=&\prod_{i=1}^d\sum_{\pi_i\in BI(\chi_w |_{J_i})}B_{\pi_i }(Z_{w|_{J_i}})=\mu(Z_{w|_{J_1}})\cdots \mu(Z_{w|_{J_d}}).
\end{align*}
Conversely, if $\mu\in \mathcal{W}^+$ satisfies the two conditions in this theorem, we define a $*$-distribution $\nu$ by assigning its Boolean cumulants $B^{(\nu)}_{\chi_w}(Z_w)=B^{(\mu)}_{\chi_w}(Z_w)$, for an alternating word $w\in \mathcal{W}^+$; and $B^{(\nu)}_{\chi_w}(Z_w)=0$, for a non-alternating word $w\in \mathcal{W}^+$, where $B^{(\mu)}$ is the Boolean cumulant function of $\mu$. By the definition of $\eta$-diagonal distributions, $\nu$ is $\eta$-diagonal. By the proof above, $\nu(Z_w)=0$, if $w$ is not mixed alternating, and $\nu(Z_w)=\nu(Z_{w_1})\cdots \nu(Z_{w_d})$ for a mixed alternating word $w=w_1...w_d$. By Lemma 4.7, $\mu=\nu$ is $\eta$-diagonal.
\end{proof}
For a pair $(a,b)$ of random variables in a non-commutative probability space $(\A, \varphi)$, the bi-Boolean $\eta$ series is $$\eta_{a,b}(z_l, z_r)=\sum_{n=1}^\infty \sum_{\chi:\{1,..., n\}\rightarrow \{l,r\}}B_\chi(Z_1, ..., Z_n)z_{\chi(1)}\cdots z_{\chi(n)}, $$ where $Z_k=a$ if $\chi(k)=l$; $Z_k=b$ if $\chi(k)=r$. A pair $(a,b)$ is $\eta$-diagonal, if $B_{\chi}(Z_1,.., Z_n)=0$ unless $n$ is even and $(Z_{s_{\chi}(1)}, ..., Z_{s_\chi(n)})=(a,b,a, b,..., a, b)$ or $Z_{s_{\chi}(1)}, ..., Z_{s_\chi(n)})=(b,a,b, a,..., b, a)$.
\begin{Proposition} Let $(x,y)$ be a $\eta$-diagonal pair of random variables in a $*$-probability space $(\A, \varphi)$. Then we have $$\eta_{(xx^*, yy^*)}(z_l, z_r)=\sum_{n=1}^\infty(B_{2n}(x, x^*,..., x, x^*)z_l^n+B_{2n}(y^*, y, ..., y^*, y)z_r^n),$$
$$\eta_{(x^*x, y^*y)}(z_l, z_r)=\sum_{n=1}^\infty(B_{2n}(x^*, x,..., x^*, x)z_l^n+B_{2n}(y, y^*, ..., y, y^*)z_r^n). $$
\end{Proposition}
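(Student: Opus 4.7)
My plan is to expand each bi-Boolean cumulant $B_\chi(Z_1,\ldots,Z_n)$, where $Z_k=xx^*$ for $\chi(k)=l$ and $Z_k=yy^*$ for $\chi(k)=r$, as a sum of bi-Boolean cumulants of the individual factors $x,x^*,y,y^*$, then use the $\eta$-diagonality of $(x,y)$ to collapse that sum to a single term whose value can be read off. Specifically, following the conventions in the proof of Theorem~2.5, I extend $\chi$ to $\hat\chi:\{1,\ldots,2n\}\to\{l,r\}$ by $\hat\chi(2k-1)=\hat\chi(2k)=\chi(k)$, declare $(z_{2k-1},z_{2k})=(x,x^*)$ when $\chi(k)=l$ and $(z_{2k-1},z_{2k})=(y,y^*)$ when $\chi(k)=r$, and invoke the bi-interval analogue of Theorem~9.1.5 of \cite{CNS2},
$$B_\chi(Z_1,\ldots,Z_n)=\sum_{\substack{\pi\in BI(\hat\chi)\\ \pi\vee\hat{0}_\chi=1_{\hat\chi}}}B_\pi(z_1,\ldots,z_{2n}),\qquad \hat{0}_\chi=\{\{1,2\},\ldots,\{2n-1,2n\}\}.$$
The explicit description of $s_{\hat\chi}$ given in the proof of Theorem~2.5 then shows that, read in the order $\prec_{\hat\chi}$, the tuple $(z_1,\ldots,z_{2n})$ becomes $(x,x^*,\ldots,x,x^*,y^*,y,\ldots,y^*,y)$, since the $r$-reversal transposes each $r$-pair $(y,y^*)$ into $(y^*,y)$.

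The next step is to kill almost all $\pi$. Definition~4.5 together with the $\eta$-diagonality of $(x,y)$ forces $B_{\hat\chi|_V}(z|_V)=0$ whenever $|V|$ is odd or the restricted tuple fails the alternation condition; in particular every nonvanishing block has even size. On the other hand, in the order $\prec_{\hat\chi}$ the pairs of $\hat{0}_\chi$ occupy exactly the adjacent rank-pairs $(1,2),(3,4),\ldots,(2n-1,2n)$, so for bi-interval $\pi$ the condition $\pi\vee\hat{0}_\chi=1_{\hat\chi}$ is equivalent to requiring every intermediate boundary between $\pi$-blocks to sit at an \emph{odd} rank. The two demands --- all block lengths even, hence all cumulative boundaries of the same parity as $0$, versus all intermediate boundaries odd --- are compatible only when there are no intermediate boundaries, i.e.\ when $\pi=1_{\hat\chi}$.

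It remains to evaluate the single survivor $B_{1_{\hat\chi}}(z_1,\ldots,z_{2n})$. When $\chi\equiv l$ the new-order sequence is $(x,x^*,\ldots,x,x^*)$, producing $B_{2n}(x,x^*,\ldots,x,x^*)$. When $\chi\equiv r$ the $r$-reversal presents the sequence as $(y^*,y,\ldots,y^*,y)$, producing $B_{2n}(y^*,y,\ldots,y^*,y)$. For any mixed $\chi$ the new-order sequence contains the junction $(\ldots,x^*,y^*,\ldots)$ at the transition from the $l$-part to the $r$-part, two consecutive starred letters; this violates the alternation condition of Definition~4.5, so $B_{1_{\hat\chi}}(z_1,\ldots,z_{2n})=0$. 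Summing over $n$ yields the first formula. The second formula, for $\eta_{(x^*x,y^*y)}$, follows from the identical argument with $(z_{2k-1},z_{2k})=(x^*,x)$ or $(y^*,y)$: the pure $l$ and pure $r$ cases produce $B_{2n}(x^*,x,\ldots,x^*,x)$ and $B_{2n}(y,y^*,\ldots,y,y^*)$, while mixed $\chi$ now creates the junction $(\ldots,x,y,\ldots)$ of two consecutive unstarred letters, which again breaks alternation.

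The only real technical hurdle is justifying the quoted bi-Boolean product formula; I expect it to follow by a straightforward Möbius inversion on the bi-interval lattice $BI$ using $\mu_{BI}$ from 3.3 of \cite{GS}, in exact parallel with the derivation of Theorem~9.1.5 of \cite{CNS2} but with bi-non-crossing replaced by bi-interval throughout. Once this is in hand, the parity collision sketched above closes the argument.
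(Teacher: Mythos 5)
Your argument is correct in substance but follows a genuinely different route from the paper's. The paper never invokes a formula for bi-Boolean cumulants with products as entries; instead it works at the level of moments, writing $B_\chi(Z_\chi)=\sum_{\pi\in BI(\chi)}(-1)^{|\pi|-1}\varphi_\pi(Z_\chi)$, embedding $BI(\chi)$ into $BI(\hat\chi)$ by doubling each node, and then showing that every $\rho\in BI(\hat\chi)$ \emph{not} in the image of this embedding has a block of odd size, whose moment vanishes by the moment characterization of $\eta$-diagonality (Theorem 4.8); this identifies the truncated sum with the full sum and yields $B_\chi(Z_\chi)=B_{\hat\chi}(Y_{\hat\chi})$ directly. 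You instead stay entirely on the cumulant side: you postulate the bi-interval analogue of Theorem 9.1.5 of \cite{CNS2} and then run a clean parity argument (even block lengths force even boundaries, while $\pi\vee\hat{0}_\chi=1_{\hat\chi}$ forces odd boundaries) to isolate $\pi=1_{\hat\chi}$; your bookkeeping of the $\prec_{\hat\chi}$ order, of the surviving term in the constant cases, and of the broken alternation at the $l$--$r$ junction in the mixed case all check out and reproduce the stated series. What your route buys is conceptual transparency --- the collapse to a single cumulant is immediate once the product formula is granted, and you use only the cumulant-level definition of $\eta$-diagonality rather than Theorem 4.8. What it costs is precisely that product formula: it is not proved in this paper nor explicitly available in the cited form in \cite{GS}, so as written your proof rests on an unproven lemma. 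The lemma is true and, as you suspect, provable by M\"obius inversion --- since $BI(\chi)\cong IN(n)$ is a Boolean lattice with $\mu_{BI}(\pi,1)=(-1)^{|\pi|-1}$, the verification is an inclusion--exclusion computation rather than anything as delicate as the bi-non-crossing case --- but to make the argument self-contained you would need to supply that proof, whereas the paper's moment-level manipulation avoids the issue entirely.
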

\begin{proof}We prove the first formula only. The proof for the other is essentially the same. For $n\in \mathbb{N}$ and $\chi:\{1,..., n\}\rightarrow \{l,r\}$,
let $Z_l=xx^*$, $Z_r=yy^*$, and $Z_\chi=Z_{\chi(1)},..., Z_{\chi(n)}$ in $B_\chi(Z_\chi)$ and $\varphi_\pi(Z_\chi)$, and $Z_\chi=Z_{\chi(1)}\cdots Z_{\chi(n)}$ in $\varphi(Z_\chi)$.
By 3.3 in \cite{GS}, we have
$$B_\chi(Z_\chi)=\sum_{\pi\in BI(\chi)}\varphi_\pi(Z_\chi)\mu_{BI}(\pi,1_n)=\sum_{\pi\in BI(\chi)}(-1)^{|\pi|-1}\varphi_\pi(Z_\chi).$$
Let $\hat{\chi}:\{1,..., 2n\}\rightarrow \{l,r\}$ by $\hat{\chi}(2k-1)=\hat{\chi}(2k)=\chi(k)$, for $k=1, ..., n$. For $\pi=\{V_1, ..., V_d\}\in BI(\chi)$,
where blocks are arranged in an increasing order with respect to $\prec_\chi$, that is,
 $s_\chi(1)=\min_{\prec_\chi}(V_1)$, and $\max_{\prec_\chi}(V_i)\prec_\chi\min_{\prec_\chi}(V_{i+1})$, for $i=1, ..., n-1$,
 define a partition $\hat{\pi}=\{\hat{V}_1,..., \hat{V}_d\}\in BI(\hat{\chi})$, where $2k-1, 2k\in \hat{V}_i\in \hat{\pi}$ if and only if $k\in V_i\in \pi$, for $i=1, ..., d$.
 Then we have $$\varphi_\pi(Z_\chi)\mu_{BI}(\pi, 1_n)=(-1)^{|\hat{\pi}|-1}\varphi_{\hat{\pi}}(Y_{\hat{\chi}})=\varphi_{\hat{\pi}}(Y_{\hat{\chi}})\mu_{BI}(\hat{\pi},1_{2n}),$$
 where $Y_{\hat{\chi}}=Y_{\hat{\chi}(1)}\cdots Y_{\hat{\chi}(2n)}$, $Y_{\hat{\chi}(2k-1)}=x, Y_{\hat{\chi}(2k)}=x^*$, if $\chi(k)=l$; $Y_{\hat{\chi}(2k-1)}=y^*, Y_{\hat{\chi}(2k)}=y$, if $\chi(k)=r$, for $k=1,...,n$.

 Let $\rho\in BI(\hat{\chi})$, and $\rho=\{S_1, S_2,..., S_d \}$ such that, for each block $S_i$, $|S_i|$ is even. Let $$S_i=\{s_{\hat{\chi}}(2p_i+1), ...,s_{\hat{\chi}}(2p_i+2k_i), s_{\hat{\chi}}(2p_i+2k_i+1), ..., s_{\hat{\chi}}(2p_i+2q_i)\},$$ where $\hat{\chi}(s_{\hat{\chi}}(2p_i+1))=...=\hat{\chi}(s_{\hat{\chi}}(2p_i+2k_i))=l$, and $\hat{\chi}(s_{\hat{\chi}}(2p_i+2k_i+1))=...=\hat{\chi}(s_{\hat{\chi}}(2p_i+2q_i))=r$.  Then $S_i=\hat{V}_i$, where $V_i=\{s_\chi(p_i+1),..., s_\chi(p_i+q_i)\}$, for $i=1, 2, ..., d$. Let $\pi=\{V_1, ..., V_d\}$. We then have $\pi\in BI(\chi)$ and $\rho=\hat{\pi}$. It implies that for $\rho\in BI(\hat{\chi})$, $\rho=\hat{\pi}$ for some $\pi\in BI(\chi)$ if and only if, for each $S\in \rho$, $|S|$ is even. Therefore, if $\rho\in BI(\hat{\chi})$ does not have a form $\hat{\pi}$ for some $\pi\in BI(\chi)$, there exists a block $S\in \rho$ such that $|S|$ is odd. By Theorem 4.8, $\varphi(Y_{\hat{\chi}|_{S}})=0$. It implies that
\begin{align*}
B_\chi(Z_\chi)=&\sum_{\pi\in BI(\chi)}(-1)^{|\pi|-1}\varphi_\pi(Z_\chi)=\sum_{\pi\in BI(\chi)}(-1)^{|\hat{\pi}|-1}\varphi_{\hat{\pi}}(Y_{\hat{\chi}})\\
=&\sum_{\rho\in BI(\hat{\chi})}(-1)^{|\rho|-1}\varphi_\rho(Y_{\hat{\chi}})=B_{\hat{\chi}}(Y_{\hat{\chi}})\\
=&\left\{\begin{array}{ll}B_{2n}(x,  x^*, ..., x, x^* ), & \text{ if } \chi\equiv l,\\
B_{2n}(y^*, y, ..., y^*, y), & \text{ if } \chi\equiv r,\\
0, & \text{ if } \chi \text{ is not constant},\end{array}\right.
\end{align*}  where $B_{\hat{\chi}}(Y_{\hat{\chi}})=0$, if $\chi$ is not constant, since $(Y_{\hat{\chi}(s_{\hat{\chi}}(1))}, ..., Y_{\hat{\chi}(s_{\hat{\chi}}(2n))})=(x, x^*, ..., x, x^*, y^*, y, ..., y^*, y)$ is not alternating.
 By the definition of $\eta$-series for $(xx^*,yy^*)$, we have
$$\eta_{(xx^*, yy^*)}=\sum_{n=1}^\infty(B_{2n}(x, x^*,..., x, x^*)z_l^n+B_{2n}(y^*, y, ..., y^*, y)z_r^n). $$
\end{proof}
\begin{Corollary} There is an $\eta$-diagonal pair $(x,y)$ of random variables in a $*$-probability space $(\A, \varphi)$ such that $(xx^*, yy^*)$ and $(x^*x, y^*y)$ are not bi-Boolean independent.
\end{Corollary}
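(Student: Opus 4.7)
The plan is to construct a specific $\eta$-diagonal pair $(x,y)$ for which a length-two mixed $B$-$(l,r)$-cumulant between the two pairs $(xx^*,yy^*)$ and $(x^*x,y^*y)$ does not vanish, so that bi-Boolean independence fails by Proposition 4.2.

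Invoking the construction of Section 7 in \cite{GS} (the same source used in the proof of Theorem 2.7), I would produce a pair $(x,y)$ in a $*$-probability space $(\A,\varphi)$ whose bi-Boolean cumulants vanish except for
\[
B_{(l,l)}(x,x^*)=B_{(l,l)}(x^*,x)=B_{(r,r)}(y,y^*)=B_{(r,r)}(y^*,y)=1
\]
and $B_{(l,l,r,r)}(x,x^*,y^*,y)=1$, together with whatever conjugate alternating cumulants are forced by $*$-positivity. Because every listed word is alternating in the sense of Definition 4.4, the pair $(x,y)$ is $\eta$-diagonal by definition.

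Since $\eta$-diagonality forces $\varphi(x)=\varphi(x^*)=\varphi(y)=\varphi(y^*)=0$, applying (4.1) to the two bi-interval partitions of $\chi=(l,l)$ yields $\varphi(xx^*)=B_{(l,l)}(x,x^*)=1$ and, by the same token, $\varphi(y^*y)=1$. To evaluate $\varphi(xx^*y^*y)$, I would enumerate the eight bi-interval partitions of $\chi=(l,l,r,r)$ with respect to the order $1\prec 2\prec 4\prec 3$. Any partition containing a singleton contributes zero because every $B_1$ vanishes, and any partition containing an odd-size block contributes zero by $\eta$-diagonality. The only surviving contributions come from $\{\{1,2\},\{3,4\}\}$, giving $B_{(l,l)}(x,x^*)B_{(r,r)}(y^*,y)=1$, and $\{\{1,2,3,4\}\}$, giving $B_{(l,l,r,r)}(x,x^*,y^*,y)=1$. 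Hence $\varphi(xx^*y^*y)=2$.

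Finally, setting $a_1=xx^*\in alg(xx^*)$ and $a_2=y^*y\in alg(y^*y)$, which belong respectively to the first and second pair of the family $\{(alg(xx^*),alg(yy^*)),(alg(x^*x),alg(y^*y))\}$, I would conclude
\[
B_{(l,r)}(xx^*,y^*y)=\varphi(xx^*y^*y)-\varphi(xx^*)\varphi(y^*y)=2-1=1\neq 0,
\]
while the index function $\omega=(1,2)$ is non-constant. By Proposition 4.2 the pairs $(xx^*,yy^*)$ and $(x^*x,y^*y)$ cannot be bi-Boolean independent. The only delicate point is the realizability of the prescribed bi-Boolean cumulants in a genuine $*$-probability space; this is provided by the Boolean product-type construction in Section 7 of \cite{GS}, and it is the main potential obstacle since without it one only has a formal candidate rather than an honest example.
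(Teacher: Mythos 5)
Your proposal is correct and follows essentially the same route as the paper: both construct an $\eta$-diagonal pair by prescribing bi-Boolean cumulants (justified via the bijectivity of the $\eta$-transform from Section 7 of \cite{GS}) with a nonzero mixed alternating cumulant $B_{(l,l,r,r)}(x,x^*,y^*,y)=1$, and then show $B_{(l,r)}(xx^*,y^*y)\neq 0$ to contradict Proposition 4.2. The only cosmetic difference is that the paper takes the minimal example $\eta_\mu=z_lz_l^*z_r^*z_r$ (so that $\varphi(xx^*)=\varphi(y^*y)=0$) and transfers the cumulant via the proof of Proposition 4.9, whereas you add nonzero second-order cumulants and evaluate $\varphi(xx^*y^*y)=2$ by enumerating the bi-interval partitions directly.
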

\begin{proof}
By Definition 7.1 in \cite{GS}, $M, R, \eta: \mathcal{D}(l, r, *)\rightarrow \mathbb{C}_0\langle\langle z_l, z_l^*, z_r, z_r^*\rangle\rangle$ are bijections.
Therefore, we can define a distribution $\mu\in \D(l,r,*)$ by the equation $\eta_\mu=z_lz_l^*z_r^*z_r$, that is, there is a pair  $(x,y)$ of random variables in a $*$-probability space $(\A, \varphi)$ such that $B_\chi(x, x^*, y^*, y)=1$, where $\chi:(1, 2, 3, 4)\mapsto (l,l, r,r)$, while all other bi-Boolean cumulants of $(x,y)$ vanish. Then $(x,y)$ is $\eta$-diagonal. It implies by the proof of Proposition 4.9 that
$$B_\chi(xx^*, y^*y)=B_{\hat{\chi}}(x, x^*, y^*, y)=1,$$ where $\chi(1)=l, \chi(2)=r$, $\hat{\chi}(1)=\hat{\chi}(2)=l$, and $\hat{\chi}(3)=\hat{\chi}(4)=r$. By Proposition 4.2, $(xx^*, yy^*)$ and $(x^*x, y^*y)$ are not bi-Boolean independent.
\end{proof}

\section{Boolean Independence with Amalgamation}
It was proved in Theorem 1.2 in \cite{NSS} that an element $x$ in a $*$-probability space $(\A, \varphi)$ is $R$-diagonal if and only if the unital algebra $\Z$ generated by $\left(\begin{matrix}0&x\\
x^*&0\end{matrix} \right)$ and the scalar diagonal matrix algebra $\D_2$ is free from  $M_2(\mathbb{C})$  with amalgamation over $\D_2$ in $(M_2(\A), \D_2, F_2)$, where $F_2:M_2(\A)\rightarrow D_2$, $F_2([a_{ij}])= \left(\begin{matrix}\varphi(a_{11})&0\\
0& \varphi(a_{22})\end{matrix}\right)$, for $[a_{ij}]\in M_2(\A)$. In this section we study Boolean independence of the above two algebras with amalgamation over $\D_2$.

Let's recall some basic facts on Boolean independence from \cite{GS} and \cite{MP}. Let $\A_1$ and $\A_2$ be two subalgebras of a non-commutative probability space $(\A, \varphi)$. We say $\A_1$ and $\A_2$ are Boolean independent if $$\varphi (Z_1Z_2\cdots Z_n)=\varphi(Z_1)\varphi(Z_2)\cdots\varphi(Z_n),\eqno (5.1)$$ for $Z_i \in \A_{j_i}, j_i\in \{1,2\}, i=1, 2, \cdots, n$, $j_1\ne j_2\ne\cdots \ne j_n$.  Let $\B$ be a subalgebra of $\A$.
A subalgebra $\A_1$ of $\A$ is called a $\B$-subalgebra if $\B\subseteq \A_1$ or $\B\bigsqcup \A_1$ is an algebra. Two $\B$-subalgebras $\A_1$ and $\A_2$ are Boolean independent over $\B$ if $(5.1)$ holds. By Remark 4.2 in \cite{MP}, if $\A_1$ and $\A_2$ are Boolean independent over $\B$, then $$E (Z_1\cdots Z_n)=E(Z_1)\cdots E(Z_n),\eqno (5.2)$$ for $Z_i \in \A_{j_i}, j_i\in \{1,2\}, i=1, 2, \cdots, n$, $j_1\ne j_2\ne\cdots \ne j_n$, where $E:\A\rightarrow \B$ is the conditional expectation (see Sections 2 and 4 in \cite{MP}).

 If $\A_2$ is unital, then we have $\varphi(A^n)=\varphi(A1A1\cdots A1)=\varphi(A)\varphi(1)\cdots \varphi(A)\varphi(1)=\varphi(A)^n$, for all $A\in \A_1$. To avoid this trivial case, Gu and Skoufranis \cite{GS} studied (bi-)Boolean independence for only non-unital (pairs of) subalgebras. In Definition 8.3 in \cite{GS}, a $\B$-subalgebra $\C$ of $\A$ was defined as a subalgebra satisfying  $\varepsilon ( \B\otimes 1_{\B})\subseteq \C$, where $\varepsilon: \B\otimes \B^{op}\rightarrow \A$ is a unital homomorphism such that $\varepsilon|_{\B\otimes 1_B}$ is injective, and $\B$ is a unital algebra over $\mathbb{C}$. It follows that $\varepsilon (1_B\otimes 1_B)=1_\A\in \C$, that is, $\C$ is a unital subalgebra. But Definition 8.3 in \cite{GS} defines bi-Boolean independence of non-unital $\B$-subalgebra pairs, which leads a contradiction.

  We provide another way to avoid the trivial case that $\varphi(A^n)=\varphi(A)^n$ in defining Boolean independence of subalgebras.  The new definition is equivalent to the well-known definition of Boolean independence for non-unital subalgebras, and avoids the contradiction in Definition 8.3 in \cite{GS}. We consider Boolean independence instead of bi-Boolean independence.

 \begin{Definition} Two subalgebras $\A_1$ and $\A_2$ of $(\A, \varphi)$ are Boolean independent if $(5.1)$ holds for $Z_i \in \A_{j_i}\setminus \mathbb{C}1, j_i\in \{1,2\}, i=1, 2, \cdots, n$, $j_1\ne j_2\ne\cdots \ne j_n$.

 Let $\B$ be a subalgebra of $(\A, \varphi)$. Two $\B$-subalgebras $\A_1$ and $\A_2$ of $(\A, \varphi)$ are Boolean independent over $\B$ if $(5.2)$ holds for $Z_i \in \A_{j_i}\setminus \mathbb{C}1, j_i\in \{1,2\}, i=1, 2, \cdots, n$, $j_1\ne j_2\ne\cdots \ne j_n$.
 \end{Definition}
 \begin{Theorem}
Let $(\A,\varphi)$ be a $*$-probability space. If the  unital algebras $\mathcal{Z}$   and $M_2(\mathbb{C})$  are Boolean independent with amalgamation over  $\D_2$ in $(M_2(\A), \D_2, F_2)$, then $$\varphi((xx^*)^n)=\varphi((x^*x)^n)=\varphi(x^n)=\varphi((x^*)^n)=0,  n\in \mathbb{N}.$$ If every non-zero random variable $x\in \A$ has a non-zero distribution, then the two algebras $\Z$ and $M_2(\mathbb{C})$ can never be Boolean independent over $\D_2$ for a non-zero $x$.
\end{Theorem}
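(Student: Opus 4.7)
The plan has two stages, matching the two conclusions of the statement. In Stage 1, I derive the four vanishing moment identities by computing $F_2$ of carefully chosen alternating products in $M_2(\A)$. In Stage 2, I combine those identities with the hypothesis on non-zero $*$-distributions to force every $*$-moment of $x$ to vanish, giving $x=0$ and thus the asserted obstruction.

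For Stage 1, I will use four short products whose factors alternate between $\Z$ and $M_2(\mathbb{C})$, with each factor taken from $\Z\setminus\mathbb{C}1$ or $M_2(\mathbb{C})\setminus\mathbb{C}1$ as required by Definition 5.1. The case $x=0$ is trivial, so assume $x\ne 0$, in which case $X$ and $X^{2n}$ are non-scalar outside of a degenerate case handled below. A direct $2\times 2$ matrix calculation gives
\begin{align*}
(E_{21}X)^n &= \text{diag}(0,\,x^n), & (XE_{12})^n &= \text{diag}(0,\,(x^*)^n),\\
E_{21}X^{2n}E_{12} &= \text{diag}(0,\,(xx^*)^n), & E_{12}X^{2n}E_{21} &= \text{diag}((x^*x)^n,\,0).
\end{align*}
Each of the four products is alternating in the sense of Definition 5.1, and in each case the outermost factor $E_{ij}$ ($i\ne j$) satisfies $F_2(E_{ij})=0$. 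Hence the amalgamated Boolean-independence identity $F_2(Z_1\cdots Z_m)=F_2(Z_1)\cdots F_2(Z_m)$ forces the $F_2$-image of the product to vanish; reading off the diagonal entries gives $\varphi(x^n)=\varphi((x^*)^n)=\varphi((xx^*)^n)=\varphi((x^*x)^n)=0$ for all $n\ge 1$, which is the first half of the theorem.

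For Stage 2, observe that $xx^*$ is self-adjoint, so its entire $*$-distribution is encoded in the sequence $\{\varphi((xx^*)^n)\}_{n\ge 1}$, which vanishes by Stage 1. The hypothesis that every non-zero element has a non-zero $*$-distribution therefore forces $xx^*=0$; symmetrically $x^*x=0$. Now let $w=x^{\omega(1)}\cdots x^{\omega(n)}$ be any non-empty word in $x$ and $x^*$. If $\omega$ is constant then $\varphi(w)=0$ by Stage 1; otherwise there is some $i$ with $\omega(i)\ne\omega(i+1)$, so $w$ contains either $xx^*$ or $x^*x$ as a consecutive substring and hence $w=0$ in $\A$. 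Therefore every $*$-moment of $x$ vanishes, the $*$-distribution of $x$ is identically zero, and the hypothesis gives $x=0$, contradicting the standing assumption $x\ne 0$.

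The main bookkeeping point, and the only real technical subtlety, is verifying that every factor of the alternating products lies outside $\mathbb{C}1$, as demanded by Definition 5.1. For the matrix units $E_{ij}$ this is automatic, and $X\notin\mathbb{C}1$ whenever $x\ne 0$; the one mildly pathological case is that $X^{2n}\in\mathbb{C}1$, i.e., $(xx^*)^n=(x^*x)^n$ happens to be a scalar. In that event the third and fourth displayed products are replaced by longer alternating chains of the shape $E_{21}X(E_{22}XE_{11}X)^{n-1}E_{22}XE_{12}$, in which consecutive copies of $X$ are separated by a diagonal idempotent in $\D_2\setminus\mathbb{C}1$ assigned to the $M_2(\mathbb{C})$ side. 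Tracking which column of the running matrix is supported at each step shows this substitute chain still evaluates to $\text{diag}(0,(xx^*)^n)$ (and analogously for the $(x^*x)^n$ chain), so the Stage 1 conclusion survives and Stage 2 then closes the argument.
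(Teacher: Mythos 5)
Your proof is correct and follows essentially the same strategy as the paper: evaluate $F_2$ on alternating products whose factors lie outside $\mathbb{C}I_2$, use the multiplicativity from Boolean independence (together with a factor whose conditional expectation vanishes) to kill the relevant moments, and then combine $\varphi((xx^*)^n)=\varphi((x^*x)^n)=0$ with the non-degeneracy hypothesis to get $xx^*=x^*x=0$ and finally $x=0$. The only substantive difference is your choice of test elements --- off-diagonal matrix units $E_{12},E_{21}$ and powers of $X$ rather than the paper's diagonal idempotents and elements such as $\mathrm{diag}((xx^*)^{m_1},\,1+(xx^*)^{m_1})$, which are non-scalar by construction --- and you correctly notice and repair the resulting need to handle the case $X^{2n}\in\mathbb{C}I_2$.
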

\begin{proof}
 Let $I_2$ be the unit of the algebra $M_2(\A)$. By \cite{NSS},  every matrix  $M\in \mathcal{Z}\setminus \mathbb{C}I_2$ has the form $$M=\left(\begin{matrix}\alpha_{11}(xx^*)^{m_1}&\alpha_{12}x(x^*x)^{m_2}\\
\alpha_{21}x^*(xx^*)^{m_3}& \alpha_{22}(x^*x)^{m_4}\end{matrix}\right),
m_1, m_2, m_3, m_4\in \mathbb{N}\cup\{0\},
 \alpha_{ij}\in \mathbb{C}, i, j=1,2, $$ for $M$ is not in $\D_2$, or
 $$M=\left(\begin{matrix}\alpha_{11}&0\\
0& \alpha_{22}\end{matrix}\right): \alpha_{11}\ne \alpha_{22}, \alpha_{11}, \alpha_{22} \in \mathbb{C}.$$
 The non-scalar part   $M_2(\mathbb{C})\setminus \mathbb{C}I_2$ is equal to  $$\left\{\left(\begin{matrix}\beta_{11}&\beta_{12}\\
\beta_{21}&\beta_{22}\end{matrix}\right): \beta_{11}\ne \beta_{22}, \text{ if } \beta_{12}=\beta_{21}=0,\ \  \beta_{ij}\in \mathbb{C}, i,j=1,2\right\}.$$

  Let $m_1, m_2, m_3\in \mathbb{N}\cup \{0\}$, and
  $$Z_1=\left(\begin{matrix}(xx^*)^{m_1}&0\\
  0&1+(xx^*)^{m_1}\end{matrix}\right), Z_2=\left(\begin{matrix}0&x(x^*x)^{m_2}\\
  0&1\end{matrix}\right), Z_3=\left(\begin{matrix}0&0\\
  x^*(xx^*)^{m_3}&1\end{matrix}\right)\in \mathcal{Z}\setminus \mathbb{C}I_2,$$
  $$ A_1=\left(\begin{matrix}1&0\\
  0&0\end{matrix}\right), A_2=\left(\begin{matrix}0&0\\
  0&1\end{matrix}\right)\in \M. $$ Then $$F_2(Z_1A_1Z_2A_2Z_3)=\left(\begin{matrix}\varphi((xx^*)^{m_1+m_2+m_3+1})&0\\
0& 0\end{matrix}\right).$$
If $\mathcal{Z}$ and $M_2(\mathbb{C})$ are Boolean independent with amalgamation over $\D_2$ in $(\L(M_2(\A)), \D_2, F_2)$, we have  $$F_2(Z_1A_1Z_2A_2Z_3)=F_2(Z_1)A_1F_2(Z_2)A_2F_2(Z_2)=0.$$ It implies  that $\varphi((xx^*)^n)=0$, for $n\in\mathbb{N}$, if $\Z$ and $M_2(\mathbb{C})$ are Boolean independent with amalgamation over $D_2$, since $m_1+m_2+m_3+1\ge 1$.
Very similarly, let $W=\left(\begin{matrix}1+(x^*x)^{m_1}&0\\
  0&(x^*x)^{m_1}\end{matrix}\right)\in \mathcal{Z}\setminus \mathbb{C}I_2$, for $m_1\in \mathbb{N}\cup \{0\}$. We then have $$0=F_2(WA_2Z_3A_1Z_2)=\left(\begin{matrix}0&0\\
0&\varphi((x^*x)^{m_1+m_2+m_3+1})\end{matrix}\right), $$ if $\mathcal{Z}$ and $M_2(\mathbb{C})$ are Boolean independent with amalgamation over $\D_2$ in $(\L(M_2(\A)), \D_2, F_2)$. It follows that $\varphi((x^*x)^n)=0, n\in \mathbb{N}$.

If, furthermore, every non-zero element in $(\A, \varphi)$ has a non-zero distribution, we get $x^*x=xx^*=0$.  If $x\ne 0$, let $A=\left(\begin{matrix}0&x\\
x^*& 0\end{matrix}\right)\in \Z\setminus \mathbb{C}I_2$, and $B=\left(\begin{matrix}0&1\\
1& 0\end{matrix}\right)\in M_2(\mathbb{C})\setminus \mathbb{C}I_2$. We have $$F_2((AB)^n)=\left(\begin{matrix}\varphi((x^n)&0\\
0& \varphi((x^*)^n)\end{matrix}\right)=F_2(A)F_2(B)\cdots F_1(A)F_2(B)=0,n=1, 2, ....$$  It implies that  all $*$-moments of $x$ are zeros,therefore, $x=0$, which leads a contradiction.

  Hence, $\Z$ and $M_2(\mathbb{C})$ are not Boolean independent over $\D_2$, if $x\ne 0$.

\end{proof}

\end{document}